\theoremstyle{plain}
\newtheorem{thm}{Theorem}[section]
\newtheorem{cor}[thm]{Corollary}
\newtheorem{lem}[thm]{Lemma}
\newtheorem{prop}[thm]{Proposition}
\theoremstyle{definition}
\newtheorem{defn}{Definition}[section]
\newtheorem{rem}{Remark}[section]
\newcommand{\R}{\mathbb{R}}
\newcommand{\E}{\mathbb{E}}
\newcommand{\bH}{\mathbb{H}}
\newcommand{\cD}{\mathcal{D}}
\newcommand{\cT}{\mathcal{T}}
\newcommand{\bfH}{\textbf{H}}
\newcommand{\eps}{\varepsilon}
\newcommand{\vf}{\varphi}
\newcommand{\la}{\langle}
\newcommand{\ra}{\rangle}
\newcommand{\ptl}{\partial}
\newcommand{\rrow}{\rightarrow}
\newcommand{\wt}{\widetilde}
\newcommand{\wh}{\widehat}
\newcommand{\supp}{\mathop{\textrm{supp}}}
\journal{}
\begin{document}

\begin{frontmatter}



\title{Backward stochastic partial differential equations with quadratic
growth\tnoteref{label1}}

\tnotetext[label1]{Supported by NSFC Grant \#10325101, NNSF Grant \#11101140, by Basic Research
Program of China (973 Program) Grant \#2007CB814904,  and by the Science Foundation
for Ministry of Education of China Grant \#20090071110001.}

\author{Kai Du}
\ead{kdu@fudan.edu.cn}
\address{
School of Mathematical Sciences, Fudan University, Shanghai 200433, China.}

\author{Shaokuan Chen}
\ead{shaokuan.chen@utdallas.edu}
\address{School of Management, University of Texas at Dallas, Richardson, TX, 75080, USA.}

\begin{abstract}
This paper is concerned with the existence and uniqueness of weak solutions to
the Cauchy-Dirichlet problem of backward stochastic partial differential
equations (BSPDEs) with nonhomogeneous terms of quadratic growth in both the
gradient of the first unknown and the second unknown. As an example, we
consider a non-Markovian stochastic optimal control problem with cost
functional formulated by a quadratic BSDE, where the corresponding value
function satisfies the above quadratic BSPDE.
\end{abstract}

\begin{keyword}
Backward stochastic partial differential equations, quadratic growth, change of
variables, weak solutions, stochastic HJB equations.
\end{keyword}

\end{frontmatter}

\section{Introduction}

Denote by $\mathcal{T}$ the fixed time duration $[0,T]$. Let $(\Omega,
\mathscr{F},\{\mathscr{F}_t\}_{t\in \mathcal{T}},\mathbb{P})$ be a complete
filtered probability space on which a $d_0$-dimensional standard Wiener process
$W_t =(W^1_t,\dots,W^{d^0}_t)^{\prime }$ is defined such that $\{
\mathscr{F}_t\}_{t\in \mathcal{T}}$ is the natural filtration generated by $W $
and augmented by all the $\mathbb{P}$-null sets in $\mathscr{F}$. We denote by
$\mathscr{P}$ the predictable $\sigma$-algebra associated with $\{
\mathscr{F}_t\}_{t\in \mathcal{T}}$.

In this paper we consider the Cauchy-Dirichlet problem for the following
parabolic quadratic backward stochastic partial differential equation (BSPDE in
short)
\begin{equation}  \label{c5:main}
du = -\big[(a^{ij}u_{x^j} + \sigma^{ik}q^k)_{x^i} + f(t,x,u,u_x,q)\big]\,dt +
q^k dW^k_t,
\end{equation}
with the terminal-boundary condition
\begin{equation}  \label{c5:con}
\left\{
\begin{array}{ll}
u(t,x)=0,~~~~~ & t\in \mathcal{T},\ x\in\partial \mathcal{D}, \\
u(T,x)=\varphi(x),~~~~~ & x\in \mathcal{D}.
\end{array}
\right.
\end{equation}
Here $\mathcal{D}$ is a simply connected and bounded region in the Euclidean
space $\mathbb{R}^d$ and we use the convention that repeated indices imply
summation. By the terminology ``super-parabolic" (resp., ``degenerate") we
mean the condition that there exist positive constants $\kappa$ and $K $
such that
\begin{equation}  \label{c5:sparab}
\kappa I_d + (\sigma^{ik})(\sigma^{jk})^* \leq 2(a^{ij}) \leq K I_d.
\end{equation}
\begin{equation}  \label{c5:degrat}
\big( \text{resp.},~~ 2a^{ij}-(\sigma^{ik})(\sigma^{jk})^*\geq0.\big)
\end{equation}
And ``quadratic" means that
\begin{center}
$|f(t,x,v,p,r)| \leq \lambda_0(t,x) + \lambda_1 |v| + \gamma(|v|)(|p|^2 +
|r|^2),$
\end{center}
for some positive constant $\lambda_1$, bounded predictable field $\lambda_0$,
and increasing function~$\gamma(\cdot):\mathbb{R}_+\rightarrow\mathbb{R}_+$.
We refer to $(f,\varphi)$ as the parameters of BSPDE \eqref{c5:main}-\eqref{c5:con}.

BSPDEs are generalized backward stochastic differential equations (BSDEs in
short) with values in function spaces. Linear BSDEs were initiated by Bismut \cite{Bis1} as
the adjoint equations in 1973 when he studied the stochastic maximum principle of stochastic optimal control problems. In 1990,
Pardoux and Peng \cite{PaPe90} introduced the general nonlinear BSDEs with
Lipschitz continuous generators. In the last two decades, extensive research on such kind of
equations has indicated that BSDEs can serve as a
powerful tool in many fields such as mathematical finance, stochastic control,
and partial differential equations (PDEs in short). See among others
\cite{KaPeQu}, \cite{PaPe2}, \cite{PaPe3}, \cite{Peng92b}. Since this paper is
inspired by the study of quadratic BSDEs, a kind of BSDEs with generators of
quadratic growth in the martingale term, we mainly introduce the development in
this direction. The motivation of studying quadratic BSDEs derives from the
feedback representation of the optimal control in the setting of linear quadratic stochastic
control problem where the related backward stochastic Riccati equation (BSRE in
short) turns out to be a quadratic BSDE. Bismut \cite{Bis78} first considered
BSREs in a special case where the generator depends on the second unknown in a
linear way. Later Peng \cite {Peng92a} applied Bellman's principle of quasi-linearization to deal with relatively general BSREs. In 2000, Kobylanski \cite{Koby00} developed a quite useful technique, the idea of which is from the Cole-Hopf
transformation in PDE theory, to overcome the difficulty from the quadratic growth of the generator
in the martingale term and obtained the existence result in one-dimensional case. Numerous literatures later
were devoted to solving the challenging problem concerning the existence and
uniqueness of solutions to BSREs in multidimensional case, among which we refer \cite{ChYo01}, \cite
{KoTa02}, \cite{KoTa03}, \cite{KoZh00}, \cite{Lim04}, \cite{LiZh02} and the
references therein to show the theoretical developments. Until 2003, Tang \cite{Tang03} gave a complete solution
to this long standing problem by a new constructive method. In addition,
general BSDEs with quadratic features are also applied to describe the value
functions and corresponding optimal trading strategies in utility maximization
problems (see e.g. \cite{HuImMu05}, \cite{Seki06}) and appear naturally in the
study of the BSDEs on manifolds (see e.g. \cite{Blac05}, \cite{Blac06}). For
the recent theoretical progress on the existence and uniqueness of solutions to
quadratic BSDEs, one can refer to \cite{BrHu06}, \cite{BrHu08}.

As the infinite dimensional counterparts of BSDEs, BSPDEs also arise from
stochastic optimal control theory. For instance, they serve as the adjoint
equations in the formulation of the stochastic maximum principle for controlled
stochastic differential equations (SDEs) with partially observed information
(see e.g. \cite{Bens83}, \cite{Tang98}) or controlled stochastic parabolic
partial differential equations (see e.g. \cite{NaNi90}, \cite {Zhou93}). Value
functions of the stochastic optimization problem of controlled non-Markovian
SDEs, according to Bellman's optimal principle and It\^{o}-Wentzell's formula,
have been shown to satisfy the so-called stochastic Hamilton-Jacobi-Bellman
(HJB) equations, a class of fully nonlinear BSPDEs (see e.g. \cite{Peng92a}).
As for their important applications to issues from financial models with random
parameters, we refer to \cite{EnKa09} and \cite{MaYo97}.

The theory of BSPDEs is more rich since such equations have features of both
BSDEs and PDEs. The theory on existence, uniqueness and regularity of solutions
to Cauchy problem of BSPDEs is fairly complete. See \cite{DuQiTa11},
\cite{Zhou92a}, \cite{DuMe10} for non-degenerate BSPDEs, and \cite {DuTaZh11},
\cite{HuMaYo02}, \cite{MaYo99}, \cite{Tang05} for the more difficult degenerate
case. However, discussions on Cauchy-Dirichlet problem are relatively less, and
one can refer to \cite{DuTang11} and \cite{Tess96}. Methods mainly applied to
handle BSPDEs include: techniques of semigroup of operators in the case of
BSPDEs with deterministic coefficients, adjoint arguments closely related to
the theory of forward SPDEs, probabilistic representation methods depending on
the theory of forward-backward stochastic differential equations (FBSDEs), and
PDE's techniques, such as frozen coefficient method and continuation method.
The last two methods are proved to be powerful to handle degenerate BSPDEs. To
the best of our knowledge, in the literature the nonhomogeneous term of a BSPDE
has at most linear growth in the second unknown. As a generalization of the
BSDE considered by Kobylanski \cite{Koby00} to infinite dimensional case, in
this paper we first explore the Cauchy-Dirichlet problem of a BSPDE with a
nonhomogeneous term that has quadratic growth with respect to both the gradient
of the first unknown and the second unknown. A change of variables scheme is
implemented to establish the existence and uniqueness of weak solutions to
non-degenerate BSPDEs with the above quadratic nonhomogeneous terms. We also
demonstrate its application in a non-Markovian stochastic optimal control
problems. As indicated in Remark \ref{rem4.1}, our approaches and results can
be easily extended to the case of the whole space $\R^d$, that is, the Cauchy
problem.

The rest of this paper is organized as follows. In section 2 we introduce some
notations and preliminary results. Section 3 and section 4 are devoted to the
existence and uniqueness of solutions to the Cauchy-Dirichlet problems of
quadratic BSPDEs, respectively. In section 5, an example of quadratic BSPDEs is
demonstrated in the context of a stochastic optimal control problem with cost
functional formulated by a quadratic BSDE.

\section{Notations and preliminaries}

For a given Banach space $\mathcal{B}$ and a constant $p\in[1,\infty]$, we denote
by $L^{p}_{\mathscr{P}}(\Omega\times\mathcal{T};\mathcal{B})$ the space of all
$\mathcal{B}$-valued predictable processes $X:\Omega\times
\mathcal{T}\rightarrow\mathcal{B}$ such that $\mathbb{E}\int_0^T\|X_t\|_
\mathcal{B}^pdt<\infty$. We also denote by $C(\mathcal{T};\mathcal{B})$ the space of
all $\mathcal{B}$-valued continuous adapted processes $X:\Omega\times
\mathcal{T}\rightarrow\mathcal{B}$ such that $\mathbb{E}\sup_{t\in\mathcal{T}
}\|X_t\|^2_\mathcal{B}<\infty$ and by $L^p(E)$ the space of all real
valued measurable functions $f$ defined on a measure space $(E,\mathcal{E}
,\mu)$ such that $\int_E|f|^pd\mu<\infty$. For simplicity we denote
\begin{equation*}
\mathbb{L}^{p}:=L^{p}_{\mathscr{P}}(\Omega\times\mathcal{T};L^p(\mathcal{D})).
\end{equation*}

For a vector $q\in\mathbb{R}^d$, $q^i$ means its $i$-th component, $
i=1,2,\cdots,d$. For a function $u$ defined on $\mathbb{R}^d$, $u_{x^i}$ or $
D_iu$ means the derivative of $u$ with respect to $x^i$. $u_x$ or $Du$, stands
for the the gradient of $u$, and $D^2u$ stands for the Hessian of $u$.

For a integer $m$, we simply denote by $H^{m}(\mathcal{D})$ and $H^{m}_0(
\mathcal{D})$ the Sobolev spaces $W^{m,2}(\mathcal{D})$ and $W^{m,2}_0(
\mathcal{D})$, respectively, with inner product $(\cdot,\cdot)_m$. With the
above notations, we simply denote
\begin{align*}
&\mathbb{H}^{m}(\mathcal{D}) :=L^{2}_{\mathscr{P}}(\Omega\times\mathcal{T};
H^{m}(\mathcal{D})),~~m=-1,0,1,2,\dots, \\
&\mathbb{H}^{n}_{0}(\mathcal{D}) :=L^{2}_{\mathscr{P}}(\Omega\times\mathcal{T
}; H^{n}_{0}(\mathcal{D})),~~n=1,2,3,\dots, \\
&\mathbb{H}^{m}(\mathcal{D};\mathbb{R}^{d_0}) :=\big(\mathbb{H}^{m}(\mathcal{
D})\big)^{d_0}.
\end{align*}
And we denote by $C_0^\infty(\mathcal{D})$ the space of infinitely differential
real functions with compact support defined on $\mathcal{D}$.

We first introduce the notation of weak solution to the BSPDE \eqref{c5:main} .

\begin{defn}
\label{c3:ws.def}  A pair of random fields $(u,q)\in \mathbb{H}^{1}_{0}(
\mathcal{D})\times  \mathbb{H}^{0}(\mathcal{D};\mathbb{R}^{d_0})$ is said to be
a weak solution  to BSPDE \eqref{c5:main} if for every $\eta \in
C_{0}^{\infty}(\mathcal{D})$,
\begin{eqnarray}  \label{chap3:weaksol}
\begin{split}
&\int_{\mathcal{D}}u(t,x)\eta(x)dx \\
=&\int_{\mathcal{D}}\varphi(x)\eta(x)dx+\int_{t}^{T}\int_{\mathcal{D}} \Big[-
\big( a^{ij}(s,x)u_{x^j}(s,x)+\sigma^{ik}(s,x)q^{k}(s,x)\big)\eta_{x^i}(x) \\
&+f(s,x,u(s,x),u_x(s,x),q(s,x))\eta(x)\Big]\,dxds -\int_{t}^{T}\int_{
\mathcal{D}}q^{k}(s,x)\eta(x)dxd\,W^{k}_{s},~~~~d\mathbb{P}\times dt\text{
-a.e.}.
\end{split}
\end{eqnarray}
\end{defn}

We present a generalized It\^{o}'s formula and a comparison principle for weak
solutions to BSPDEs, the proof of which one can refer to \cite{DeMaSt09} or
\cite{QiuTang11}.

\begin{lem}
\label{c3:2-lem1}  Suppose $f^0\in\mathbb{L}^{1}(\mathcal{D})$, $f^i, q^k\in
\mathbb{H}^{0}(\mathcal{D}),~i=1,\dots,d,~k=1,\dots,d_0$, and  $u\in \mathbb{
H}^{1}_0(\mathcal{D})\cap C(\mathcal{T};L^2(\mathcal{D}))$. If for any  ~$
\eta\in C^\infty_0(\mathcal{D})$,
\begin{equation*}
(u(t),\eta)_0 = (u(T),\eta)_0 + \int_t^T \Big[(f^0(s),\eta)_0 -
(f^i(s),\eta_{x^i})_0 \Big]\,ds - \int_t^T (q^k(s),\eta)_0\,dW^k_s,~~~\forall
t\in\mathcal{T},~~~a.s.,
\end{equation*}
it holds that for every $\psi$  such that $\psi^{\prime }$ and $\psi^{\prime
\prime }$ are bounded and $\psi^{\prime }(0)=0$,
\begin{eqnarray}  \label{c3:2-2-002}
\begin{split}
\int_{\mathcal{D}} & \psi(u(t,x))\,dx - \int_{\mathcal{D}} \psi(u(T,x))\,dx
\\
=~& \int_t^T\!\!\int_{\mathcal{D}}\Big[\psi^{\prime 0 }- \psi^{\prime \prime
}(u)u_{x^i} f^i -\frac{1}{2}\psi^{\prime \prime 2}\Big](s,x)\,dxds \\
&- \int_t^T\!\!\int_{\mathcal{D}}\psi^{\prime k}(s,x)\,dxdW^k_s,~~~\forall
t\in\mathcal{T},~~~a.s..
\end{split}
\end{eqnarray}
\end{lem}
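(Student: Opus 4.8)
The plan is to establish the formula by spatial mollification, reducing matters to the classical finite-dimensional It\^o formula applied pointwise in $x$, and then passing to the limit. I fix a standard mollifier $\rho_\eps$ on $\bR^d$ and extend $u,f^0,f^i,q^k$ by zero outside $\cD$; since $u\in\bH^1_0(\cD)$, its zero extension lies in $L^2_{\sP}(\Omega\times\cT;H^1(\bR^d))$. For each $x$ in the interior subdomain $\cD_\eps:=\{x\in\cD:\mathrm{dist}(x,\ptl\cD)>\eps\}$, testing the hypothesis against $\eta(\cdot)=\rho_\eps(x-\cdot)\in C_0^\infty(\cD)$ shows that the mollified field $u^\eps(t,x):=\int_{\cD}u(t,y)\rho_\eps(x-y)\,dy$ solves, for fixed $x$, the backward It\^o equation
\begin{equation*}
u^\eps(t,x)=u^\eps(T,x)+\int_t^T\big[f^{0,\eps}+\ptl_{x^i}f^{i,\eps}\big](s,x)\,ds-\int_t^T q^{k,\eps}(s,x)\,dW^k_s,
\end{equation*}
where a superscript $\eps$ denotes convolution with $\rho_\eps$. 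Because $u^\eps(\cdot,x)$ is a continuous semimartingale that is smooth in $x$, I would apply the one-dimensional It\^o formula to $\psi(u^\eps(t,x))$ for a.e.\ $x$, integrate the resulting identity over $\cD_\eps$, and invoke the stochastic Fubini theorem to interchange the spatial integral with the It\^o integral.

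The decisive manipulation is the treatment of the divergence term $\int_{\cD_\eps}\psi'(u^\eps)\,\ptl_{x^i}f^{i,\eps}\,dx$: integrating by parts in $x$ converts it into $-\int_{\cD_\eps}\psi''(u^\eps)\,u^\eps_{x^i}f^{i,\eps}\,dx$ plus a boundary integral over $\ptl\cD_\eps$. This is precisely the step that produces the cross term $-\psi''(u)u_{x^i}f^i$ in the claimed formula, and it is where the two structural hypotheses enter: $\psi'(0)=0$ together with $u\in\bH^1_0(\cD)$ (so that the trace of $u$ on $\ptl\cD$ vanishes) force the boundary contribution to disappear in the limit. I expect this boundary/commutator term to be the main obstacle, since $\ptl_{x^i}f^{i,\eps}$ is only of order $\eps^{-1}$ in $\bH^0(\cD)$ (as $f^i$ is merely square integrable), so that only the algebraic cancellation after integration by parts, combined with $\psi'(u^\eps)\to\psi'(0)=0$ near $\ptl\cD$, keeps it under control; making this rigorous requires a trace estimate for $u^\eps$ and the absolute continuity of the Lebesgue integral over the shrinking boundary layer $\cD\setminus\cD_\eps$.

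It then remains to pass $\eps\to0$ in each term. Using $u^\eps\to u$ in $\bH^1_0(\cD)$, $f^{i,\eps}\to f^i$ and $q^{k,\eps}\to q^k$ in $\bH^0(\cD)$, $f^{0,\eps}\to f^0$ in $\bL^1(\cD)$, together with the boundedness and continuity of $\psi'$ and $\psi''$, dominated convergence yields the convergence of all Lebesgue integrals; here the boundedness of $\psi'$ is exactly what allows the source term $f^0$ to be merely in $\bL^1(\cD)$. For the martingale term I would use the It\^o isometry and the Lipschitz bound $|\psi'(u^\eps)-\psi'(u)|\le C|u^\eps-u|$ to show $\int_{\cD}\psi'(u^\eps)q^{k,\eps}\,dx\to\int_{\cD}\psi'(u)q^k\,dx$ in $L^2(\Omega\times\cT)$, whence the stochastic integrals converge in $L^2(\Omega)$. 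Finally, $\psi$ being Lipschitz (as $\psi'$ is bounded) and $u\in C(\cT;L^2(\cD))$ give the convergence of the terminal-type integrals $\int_{\cD}\psi(u^\eps(t))\,dx$ and $\int_{\cD}\psi(u^\eps(T))\,dx$ for every $t$, while $\psi'(0)=0$ guarantees $\psi(u)-\psi(0)\in\bL^1(\cD)$ so that these are finite over the bounded domain $\cD$. Collecting the limits produces \eqref{c3:2-2-002}.
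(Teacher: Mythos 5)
The paper does not actually prove this lemma: it is quoted with a pointer to \cite{DeMaSt09} and \cite{QiuTang11}, so there is no internal proof to compare against, and your sketch has to stand on its own. Its overall strategy --- mollify in space, apply the finite-dimensional It\^o formula to $\psi(u^{\eps}(\cdot,x))$ pointwise, integrate in $x$ with stochastic Fubini, and pass to the limit using the strong $L^2$ convergence of $u^{\eps}_x$, $f^{i,\eps}$, $q^{k,\eps}$ together with the boundedness of $\psi'$ and $\psi''$ --- is sound, and it is in fact the same device the paper itself uses in the interior in the proof of Lemma \ref{c5:2-lem2}. The limit passages you describe for the Lebesgue integrals, the quadratic-variation term and the stochastic integral are all routine and correctly justified. (Two small points: the identity in the hypothesis holds a.s.\ for each fixed $\eta$, so testing against the uncountable family $\rho_{\eps}(x-\cdot)$ requires the usual selection of a common null set via continuity in $x$; and you implicitly need $\psi''$ continuous, not merely bounded, to invoke the classical It\^o formula and to pass to the limit in $\psi''(u^{\eps})$.)

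The one genuine gap is the boundary term $\int_{\partial\mathcal{D}_{\eps}}\psi'(u^{\eps})f^{i,\eps}n^i\,dS$, which you name but do not close, and which is exactly where the work lies. The trace of $u^{\eps}$ on $\partial\mathcal{D}_{\eps}$ does not vanish, and $f^{i,\eps}$ is only controlled in $L^2(\mathcal{D})$, so a surface integral at a fixed radius $\eps$ is not directly estimable; you must either average over radii (coarea formula) to extract a good sequence $\eps_j\to0$, or --- cleaner --- replace the sharp cutoff $\mathbf{1}_{\mathcal{D}_{\eps}}$ by a smooth $\chi_{\eps}$ with $|\nabla\chi_{\eps}|\le C\eps^{-1}$ supported in the boundary layer, which turns the offending term into a volume integral bounded by $C\eps^{-1}\|\psi'(u^{\eps})\|_{L^2(\mathcal{D}\setminus\mathcal{D}_{c\eps})}\|f^{\eps}\|_{L^2(\mathcal{D}\setminus\mathcal{D}_{c\eps})}$. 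To beat the factor $\eps^{-1}$ you then need $\|u\|_{L^2(\mathcal{D}\setminus\mathcal{D}_{c\eps})}=O(\eps)$, i.e.\ a Hardy-type inequality for $H^1_0(\mathcal{D})$; combined with $|\psi'(v)|\le C|v|$ (this is where $\psi'(0)=0$ enters) and the uniform integrability of $|f^i|^2$ over the shrinking layer, this does give $o(1)$. But Hardy's inequality requires some regularity of $\partial\mathcal{D}$ beyond the paper's standing assumption that $\mathcal{D}$ is bounded and simply connected, and without it the naive estimate $\|u^{\eps}\|_{L^2(\text{layer})}=o(1)$ is not enough to cancel $\eps^{-1}$. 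So as written the argument is a correct plan with its hardest step left open; either supply the Hardy/coarea argument under an explicit boundary-regularity hypothesis, or fall back on the route of the cited references.
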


\begin{lem}
\label{lem2}  Let $(u_1,q_1)$, $(u_2,q_2)\in\mathbb{H}^{1}_{0}(\mathcal{D}
)\times  \mathbb{H}^{0}(\mathcal{D};\mathbb{R}^{d_0})$ be weak solutions to
BSPDEs  with parameters $(f_1,\varphi_1)$ and $(f_2,\varphi_2)$, respectively.
Assume

(i) For any $(v,p,r)\in\mathbb{R}\times\mathbb{R}^d\times\mathbb{R}^{d_0}$, $
f_i(\cdot,\cdot,\cdot,v,p,r)$  is $\mathscr{P}\times\mathscr{B}(\mathcal{D})$
measurable, $i=1,2$. Moreover, there exists  a constant $L>0$ such that for any
$(v_1,p_1,r_1)$, $(v_2,p_2,r_2)\in\mathbb{R}\times\mathbb{R}^d\times
\mathbb{R}^{d_0}$,
\begin{equation*}
\begin{split}
&|f_i(t,x,v_1,p_1,r_1)-f_i(t,x,v_2,p_2,r_2)| \\
\leq~&L(|v_1-v_2|+|p_1-p_2|+|r_1-r_2|),\ \forall (\omega,t,x)\in\Omega\times
\mathcal{T}\times\mathcal{D},\ i=1,2.
\end{split}
\end{equation*}

(ii) $f_i(\cdot,0,0,0)\in\mathbb{H}^{0}(\mathcal{D})$, $i=1,2$.

(iii) $\varphi_i:\Omega\times\mathcal{D}\rightarrow\mathbb{R}$ is $
\mathscr{F}_T\times\mathscr{B}(\mathcal{D})$  measurable, and $\varphi_i\in
L^2(\Omega\times\mathcal{D})$, $i=1,2$.\newline If $\varphi_1\leq\varphi_2$ and
$f_1\leq f_2$, we have $u_1\leq u_2$.
\end{lem}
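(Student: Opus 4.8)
The plan is to reduce the comparison to an energy estimate for the difference $\hat u := u_1 - u_2$ (with $\hat q := q_1 - q_2$), and to show that $\E\int_{\mathcal{D}}(\hat u^+(t))^2\,dx = 0$ for every $t$. First I would subtract the two weak formulations: $(\hat u,\hat q)$ is then a weak solution with terminal datum $\hat\varphi := \varphi_1-\varphi_2\le 0$ and generator $f_1(s,x,u_1,u_{1,x},q_1)-f_2(s,x,u_2,u_{2,x},q_2)$. I split this as $[f_1(s,x,u_1,u_{1,x},q_1)-f_1(s,x,u_2,u_{2,x},q_2)]+[f_1(s,x,u_2,u_{2,x},q_2)-f_2(s,x,u_2,u_{2,x},q_2)]$, where the second bracket, call it $g$, satisfies $g\le 0$ by the hypothesis $f_1\le f_2$. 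Using the Lipschitz assumption (i), I linearize the first bracket as $\alpha\hat u+\beta^i\hat u_{x^i}+\gamma^k\hat q^k$ with predictable coefficients bounded by $L$ (defined by the usual incremental-quotient device, set to zero where a denominator vanishes). Thus $\hat u$ solves a linear BSPDE with bounded coefficients, a nonpositive source $g$, and a nonpositive terminal value.

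Next I would apply the generalized It\^o formula of Lemma \ref{c3:2-lem1} to $\psi(\hat u)$, with $\psi$ approximating $(\cdot^+)^2$, so that $\psi'\approx 2(\cdot)^+$ and $\psi''\approx 2\,\mathbf{1}_{(0,\infty)}$ and in particular $\psi'(0)=0$. Since the lemma requires $\psi',\psi''$ bounded, I would first take $\psi=\psi_n$ equal to $v^2$ on $[0,n]$, vanishing on $(-\infty,0]$, and affine beyond $n$, and pass to the limit $n\to\infty$ at the end. Taking expectations (the stochastic integral being a martingale after a standard localization) and using $\psi(\hat u(T))=(\hat\varphi^+)^2=0$, I obtain an identity for $\E\int_{\mathcal{D}}\psi(\hat u(t))\,dx$ whose integrand is $\psi'(\hat u)\big(\alpha\hat u+\beta^i\hat u_{x^i}+\gamma^k\hat q^k+g\big)-\psi''(\hat u)\hat u_{x^i}\big(a^{ij}\hat u_{x^j}+\sigma^{ik}\hat q^k\big)-\tfrac12\psi''(\hat u)|\hat q|^2$.

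The crux, and the step I expect to be the main obstacle, is absorbing the terms quadratic in $(\hat u_x,\hat q)$. On $\{\hat u>0\}$, where $\psi''=2$, I would group the $\hat q$-terms and complete the square, $2(\hat u\gamma^k-\sigma^{ik}\hat u_{x^i})\hat q^k-|\hat q|^2\le \sum_k(\hat u\gamma^k-\sigma^{ik}\hat u_{x^i})^2$, so that the genuinely quadratic gradient part reduces to $|\sigma^{*}\hat u_x|^2-2a^{ij}\hat u_{x^i}\hat u_{x^j}$ plus lower-order cross terms, where $|\sigma^{*}\hat u_x|^2:=(\sigma^{ik})(\sigma^{jk})^{*}\hat u_{x^i}\hat u_{x^j}$. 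Here the super-parabolicity \eqref{c5:sparab}, which yields $2a^{ij}\hat u_{x^i}\hat u_{x^j}\ge\kappa|\hat u_x|^2+|\sigma^{*}\hat u_x|^2$, is essential: it cancels the $|\sigma^{*}\hat u_x|^2$ term and leaves a strictly negative $-\kappa|\hat u_x|^2$ that, through Young's inequality with small parameters, absorbs the cross terms $2\hat u\beta^i\hat u_{x^i}$ and $-2\hat u\gamma^k\sigma^{ik}\hat u_{x^i}$. Combined with $\psi'(\hat u)g\le 0$ and $|\alpha|,|\beta|,|\gamma|\le L$, the whole integrand is bounded above by $C(\hat u^+)^2$ for a constant $C=C(L,\kappa,K,d_0)$.

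Finally, letting $n\to\infty$ (the contributions coming from $\{\hat u>n\}$ vanish because $\hat u, D\hat u,\hat q$ all lie in $L^2$ while the measure of $\{\hat u>n\}$ tends to zero, and $\psi_n(\hat u)\uparrow(\hat u^+)^2$ by monotone convergence) yields $\E\int_{\mathcal{D}}(\hat u^+(t))^2\,dx\le C\int_t^T\E\int_{\mathcal{D}}(\hat u^+(s))^2\,dx\,ds$. A backward Gronwall argument then forces $\E\int_{\mathcal{D}}(\hat u^+(t))^2\,dx=0$ for all $t$, i.e.\ $u_1\le u_2$. I expect the only genuine difficulties to be the parameter bookkeeping in the completing-the-square/Young step (so that super-parabolicity exactly dominates the borrowed gradient terms) and the justification of the It\^o formula for the quadratic test function via the truncation $\psi_n$; the linearization, the verification that weak solutions lie in $C(\mathcal{T};L^2(\mathcal{D}))$ so that Lemma \ref{c3:2-lem1} applies, and the Gronwall conclusion are routine.
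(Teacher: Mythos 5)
Your proof is correct and follows essentially the same route as the paper: the paper does not prove Lemma \ref{lem2} itself but delegates it to \cite{DeMaSt09} and \cite{QiuTang11}, and your scheme --- linearize the difference of generators via the Lipschitz bound, apply the generalized It\^o formula of Lemma \ref{c3:2-lem1} to a truncated version of $(\hat u^+)^2$, absorb the quadratic terms using super-parabolicity (equivalently Lemma \ref{c5:2-2-002}), and conclude by Gronwall --- is exactly the argument of those references and the same $\psi$-based It\^o/Gronwall machinery the paper itself uses in the proofs of Proposition \ref{c5:2-prp1} and Theorem \ref{c5:3-thm1}. Your truncation $\psi_n$ satisfies the hypotheses of Lemma \ref{c3:2-lem1} as stated (bounded $\psi'$, $\psi''$ with $\psi'(0)=0$), and your disposal of the $\{\hat u>n\}$ remainder via the $L^2$-integrability of $\hat u$, $\hat u_x$, $\hat q$ is adequate.
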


Using a similar procedure in Proposition \ref{c5:2-prp1}, we have

\begin{cor}
\label{c3:5-002.cor}  Let the parameters $(f,\varphi)$ of BSPDE
\eqref{c5:main}-\eqref{c5:con} satisfy  the assumptions in Lemma \ref{lem2} and
let $(u,q)$ be a weak solution to  BSPDE \eqref{c5:main}-\eqref{c5:con} with
parameters $(f,\varphi)$.  Suppose $\zeta:\mathcal{T} \rightarrow[0,\infty)$
satisfies the ODE $\dot{\zeta}(t)=-g(t,\zeta(t))$. Then, if $f(\omega, t, x,
\zeta(t),0,0)\leq g(t,\zeta(t))$, we have
\begin{equation*}
u(t,x)\leq \zeta(t),\ d\mathbb{P}\times dx\ a.e.,\ \forall t\in\mathcal{T}.
\end{equation*}
\end{cor}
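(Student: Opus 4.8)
The plan is to compare $u$ with the deterministic, spatially constant process $\zeta$ by controlling the positive part of their difference. Set $w:=u-\zeta$. Since $u$ vanishes on $\partial\cD$ and $\zeta\ge0$, we have $w=-\zeta\le0$ on $\partial\cD$, so that $w^+\in\bH^1_0(\cD)$ even though $w$ itself does not satisfy the Dirichlet condition; this is the observation that makes the $\bH^1_0$-machinery available. Because $\zeta$ is independent of $x$ and carries no martingale part, $w\in\bH^1(\cD)\cap C(\cT;L^2(\cD))$ satisfies, in the weak sense of Definition \ref{c3:ws.def}, the same type of equation as $u$ with source $f^0(s,x)=f(s,x,u,u_x,q)-g(s,\zeta(s))$, flux $f^i=a^{ij}w_{x^j}+\sigma^{ik}q^k=a^{ij}u_{x^j}+\sigma^{ik}q^k$, and martingale coefficient $q^k$. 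I would then apply the generalized It\^o formula (Lemma \ref{c3:2-lem1}) to $\psi(w)$, where $\psi$ is a smooth admissible approximation of $v\mapsto(v^+)^2$ (so $\psi\ge0$, $\psi'(v)=\psi''(v)=0$ for $v\le0$, and $\psi',\psi''$ bounded with $\psi'(0)=0$), and pass to the limit $\psi(v)\to(v^+)^2$ at the end.

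Writing $\Phi(t):=\E\int_\cD(w^+(t,x))^2\,dx$, taking expectations kills the stochastic integral, and the terminal contribution $\E\int_\cD\psi(w(T))\,dx$ vanishes because $w(T)=\varphi-\zeta(T)\le0$ under the terminal comparison $\varphi\le\zeta(T)$ that accompanies the choice of $\zeta$ as in Proposition \ref{c5:2-prp1}. It remains to bound the two drift groups on the set $\{w>0\}=\{u>\zeta\}$. For the source term I would combine the standing hypothesis $f(\omega,s,x,\zeta(s),0,0)\le g(s,\zeta(s))$ with the Lipschitz assumption of Lemma \ref{lem2} to get, on $\{w>0\}$,
\begin{equation*}
f(s,x,u,u_x,q)-g(s,\zeta)\le\big[f(s,x,u,u_x,q)-f(s,x,\zeta,0,0)\big]\le L\big(w^++|u_x|+|q|\big),
\end{equation*}
so that $\psi'(w)f^0\le 2w^+\,L(w^++|u_x|+|q|)$ in the limit $\psi=(\cdot^+)^2$.

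The second-order group $-\psi''(w)\big(a^{ij}u_{x^i}u_{x^j}+\sigma^{ik}u_{x^i}q^k\big)-\tfrac12\psi''(w)|q|^2$ is where super-parabolicity enters. I would write it as $-\psi''(w)\,Q(u_x,q)$ with the quadratic form
\begin{equation*}
Q(p,r)=a^{ij}p_ip_j+\sigma^{ik}p_ir^k+\tfrac12|r|^2=\begin{pmatrix}p\\r\end{pmatrix}^{\!*}\begin{pmatrix}A&\tfrac12\sigma\\\tfrac12\sigma^*&\tfrac12 I\end{pmatrix}\begin{pmatrix}p\\r\end{pmatrix},
\end{equation*}
whose Schur complement $A-\tfrac12\sigma\sigma^*\ge\tfrac{\kappa}{2}I$ is positive by \eqref{c5:sparab}; hence $Q(p,r)\ge c\,(|p|^2+|r|^2)$ for a constant $c>0$ depending only on $\kappa,K$ and the bound on $\sigma$. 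Since $\psi''\ge0$, two applications of Young's inequality absorb the cross terms $2Lw^+(|u_x|+|q|)$ into $\psi''(w)\,Q$ at the cost of a multiple of $(w^+)^2$, leaving
\begin{equation*}
\psi'(w)f^0-\psi''(w)Q\le C\,(w^+)^2,\qquad C=C(L,c).
\end{equation*}
Integrating in space and time then yields the backward Gronwall inequality $\Phi(t)\le C\int_t^T\Phi(s)\,ds$, whence $\Phi\equiv0$ and $u\le\zeta$, $d\bP\times dx$-a.e.\ for every $t\in\cT$.

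The main obstacle is the rigorous application of Lemma \ref{c3:2-lem1} to $w$, which does not lie in $\bH^1_0(\cD)$. The point is that $\psi$ and $\psi'$ are supported on $[0,\infty)$, so every integrand in the It\^o formula involves $w$ only through $w^+\in\bH^1_0(\cD)$; one therefore either invokes the formula for the $\bH^1$-function $w$ together with the Green boundary term (which vanishes because $\psi'(w)$ has zero trace on $\partial\cD$, its boundary value being $\psi'(-\zeta)=0$) or re-runs the proof of Lemma \ref{c3:2-lem1} with $w^+$ in place of the admissible argument. The accompanying approximation step—choosing $\psi_n\uparrow(\cdot^+)^2$ with uniformly controlled $\psi_n',\psi_n''$ and exploiting that $\psi_n'(v)v$ is dominated by $2\psi_n(v)$ up to a vanishing error, so as to preserve the Gronwall structure in the limit—is routine but must be carried out to legitimize the formal choice $\psi=(\cdot^+)^2$.
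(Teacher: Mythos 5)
Your proof is correct, and it follows the same skeleton as the paper's (the paper proves the corollary ``by the same procedure as Proposition~\ref{c5:2-prp1}'': apply the generalized It\^o formula of Lemma~\ref{c3:2-lem1} to a convex function of $u-\zeta$ that vanishes on $(-\infty,0]$, kill the martingale and terminal terms, and close with Gronwall). The genuine difference is the choice of test function and the mechanism for absorbing the gradient terms. The paper uses the exponential-type weight $\Psi_1(v)=e^{2\lambda v}-(1+2\lambda v+2\lambda^2v^2)$ on $v\ge 0$, engineered so that $\lambda\Psi_1'-\tfrac12\Psi_1''\le 0$; this lets the quadratic-growth part $\lambda\mu_0(|u_x|^2+|q|^2)$ of $f$ be swallowed by the coercive term $-\tfrac12\Psi_1''\langle A(u_x,q),(u_x,q)\rangle$ with no Lipschitz hypothesis on $(p,r)$, which is exactly what is needed in Proposition~\ref{c5:2-prp1}. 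You instead take $\psi\approx(v^+)^2$ and absorb the cross terms $Lw^+(|u_x|+|q|)$ into $\psi''Q$ by Young's inequality; this only works because the corollary is stated under the hypotheses of Lemma~\ref{lem2}, i.e.\ global Lipschitz continuity of $f$ in $(v,p,r)$, and under those hypotheses it is a perfectly valid and more elementary route (it is essentially the standard comparison-theorem energy estimate). Two of your side remarks are actually improvements on the paper's exposition: you make explicit the terminal comparison $\varphi\le\zeta(T)$, which the corollary's statement leaves implicit in the choice of $\zeta$ (an ODE without a prescribed terminal value does not pin $\zeta$ down), and you address why the It\^o formula may be applied even though $w=u-\zeta\notin\mathbb{H}^1_0(\mathcal{D})$, namely that every integrand sees $w$ only through $w^+=(u-\zeta)^+\in\mathbb{H}^1_0(\mathcal{D})$ since $\zeta\ge0$; the paper applies Lemma~\ref{c3:2-lem1} to $\Psi_1(u-\xi)$ without comment on this point. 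The only caveat, shared equally by the paper's own argument (whose $\Psi_1$ is defined only on $[-M_1,M_1]$ with $M_1$ depending on $\|u\|_{\mathbb{L}^\infty}$), is that the approximation $\psi_n\uparrow(\cdot^+)^2$ with bounded $\psi_n',\psi_n''$ is painless only when $u$ is known to be bounded; for merely $\mathbb{L}^2$ solutions the Young absorption degenerates on the region where $\psi_n$ is linearized, so you should either assume $u\in\mathbb{L}^\infty$ (as in all applications of the corollary) or invoke the It\^o formula for $\|w^+\|_{L^2(\mathcal{D})}^2$ directly.
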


Finally, we give a simple but useful result, which will be used frequently in
the subsequent argument.

\begin{lem}
\label{c5:2-2-002}  Let $\mu_{0}=\frac{\kappa}{1+2K}$. Then for any vectors $
p\in\mathbb{R}^d$ and $r\in\mathbb{R}^{d_0}$, it holds that
\begin{equation}  \label{c5:2-2-0021}
2 a^{ij} p^i p^j + 2\sigma^{ik} p^i r^k + |r|^2 \geq \mu_{0} (|p|^2 + |r|^2).
\end{equation}
\end{lem}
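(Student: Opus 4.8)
The plan is to read the left-hand side as a single quadratic form
\[
Q(p,r) := 2a^{ij}p^ip^j + 2\sigma^{ik}p^ir^k + |r|^2
\]
on $\R^d\times\R^{d_0}$ and to bound it from below by completing a square, the only structural input being the super-parabolicity \eqref{c5:sparab}. The useful half of \eqref{c5:sparab} is the lower bound $2(a^{ij})\geq \kappa I_d + \sigma\sigma^{*}$, where $\sigma\sigma^{*}$ denotes the matrix with entries $\sigma^{ik}\sigma^{jk}$; the upper bound $\sigma\sigma^{*}\leq 2(a^{ij})\leq K I_d$ will be used only to control $|\sigma^{*}p|$.

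First I would use the lower bound on $2(a^{ij})$ to write $2a^{ij}p^ip^j \geq \kappa|p|^2 + |\sigma^{*}p|^2$, since $p^{*}\sigma\sigma^{*}p = |\sigma^{*}p|^2$. Substituting this into $Q$ and noticing that $2\sigma^{ik}p^ir^k = 2(\sigma^{*}p)\cdot r$, the three terms $|\sigma^{*}p|^2 + 2(\sigma^{*}p)\cdot r + |r|^2$ assemble into the perfect square $|\sigma^{*}p + r|^2$, yielding the key estimate
\[
Q(p,r)\geq \kappa|p|^2 + |\sigma^{*}p + r|^2. \qquad (\ast)
\]

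The main obstacle is that after this step $r$ appears only inside $|\sigma^{*}p + r|^2$, which may vanish, so $(\ast)$ alone gives no control of $|r|^2$. To recover it, I set $w:=\sigma^{*}p + r$ and estimate $|r|^2 = |w-\sigma^{*}p|^2 \leq 2|w|^2 + 2|\sigma^{*}p|^2 \leq 2|w|^2 + 2K|p|^2$, where $|\sigma^{*}p|^2 = p^{*}\sigma\sigma^{*}p\leq K|p|^2$ comes from the upper bound in \eqref{c5:sparab}. Hence $|p|^2+|r|^2 \leq (1+2K)|p|^2 + 2|w|^2$, and multiplying by $\mu_{0} = \kappa/(1+2K)$ gives $\mu_{0}(|p|^2+|r|^2)\leq \kappa|p|^2 + 2\mu_{0}|w|^2$. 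It then remains only to check $2\mu_{0}\leq 1$: since \eqref{c5:sparab} forces $\kappa\leq K$, we have $2\mu_{0} = 2\kappa/(1+2K)\leq 2K/(1+2K)<1$, so comparison with $(\ast)$ closes the estimate. The only delicate point is this constant bookkeeping, namely confirming that the value $\mu_{0}=\kappa/(1+2K)$ stated in the lemma is exactly the one produced by the split $|r|^2 \leq 2|w|^2 + 2K|p|^2$.
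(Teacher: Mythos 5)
Your proof is correct: the lower bound in \eqref{c5:sparab} gives $Q(p,r)\geq \kappa|p|^2+|\sigma^*p+r|^2$, and your recovery of the $|r|^2$ term via $|r|^2\leq 2|\sigma^*p+r|^2+2K|p|^2$ together with $2\mu_0<1$ yields exactly the constant $\mu_0=\kappa/(1+2K)$. The paper states this lemma as ``simple but useful'' and omits the proof entirely, and your completing-the-square argument is precisely the standard one intended.
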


\section{The existence of solutions}

Throughout this paper we always assume that coefficients $a^{ij}=a^{ji}$ and
$\sigma^{ik}$ are $\mathscr{P}\times\mathscr{B}(\mathcal{D})$ measurable and
bounded functions, $i,j=1,\dots,d,~k=1,\dots,d_0$. As for coefficients $f$ and
$\varphi$, we assume in this section \medskip

(\textbf{H}1) (i) For every $(v,p,r)\in\mathbb{R}\times\mathbb{R}^d\times
\mathbb{R}^{d_0}$, $f(\cdot,\cdot,\cdot,v,p,r)$ is $\mathscr{P}\times
\mathscr{B}(\mathcal{D})$ measurable. And for every $(\omega,t,x)$, $f$ is
continuous with respect to $(v,p,r)$.

(ii) There exist a positive function $\lambda_0\in\mathbb{L}^{\infty}\cap
\mathbb{L}^{2}$, a positive constant $\lambda_1$ and a increasing function $
\gamma(\cdot):\mathbb{R}_+\rightarrow\mathbb{R}_+$ such that for every $
(\omega,t,x,v,p,r)$,

\begin{center}
$|f(t,x,v,p,r)| \leq \lambda_0(t,x) + \lambda_1 |v| + \gamma(|v|)(|p|^2 +
|r|^2).$
\end{center}

\medskip

(\textbf{H}2)~~$\varphi$: $\Omega\times\mathcal{D}\rightarrow \mathbb{R}$ is
$\mathscr{F}_T\times\mathscr{B}(\mathcal{D})$ measurable and $\varphi\in
L^\infty(\Omega\times\mathcal{D})\cap L^2(\Omega\times\mathcal{D})$.\medskip

The main theorem of this section is

\begin{thm}
\label{c5:2-thm1}  Suppose
\eqref{c5:sparab},~(\textbf{H}1)~and~(\textbf{H}2)~hold. Then there exists a
weak solution $(u,q)  \in \mathbb{H}^1_0(
\mathcal{D})\times\mathbb{H}^0(\mathcal{D};\mathbb{R}^{d_0})$ to BSPDE~
\eqref{c5:main}-\eqref{c5:con},  and $u\in L^2(\Omega;C(\mathcal{T};L^2(
\mathcal{D})))\cap\mathbb{L}^{\infty}$.
\end{thm}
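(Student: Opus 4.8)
The plan is to implement, in the infinite-dimensional setting, the monotone stability scheme of Kobylanski: first derive a uniform $L^\infty$ a priori bound, then approximate $f$ by a monotone sequence of Lipschitz generators whose BSPDEs are solvable by the existing linear-growth theory, and finally pass to the limit. The Cole--Hopf exponential change of variables enters through a careful choice of the test function $\psi$ in the generalized It\^o formula of Lemma~\ref{c3:2-lem1}; this is what lets the coercivity of the principal part swallow the quadratic terms.

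First I would establish the a priori $L^\infty$ estimate. Evaluating the growth bound in (\textbf{H}1)(ii) at $p=r=0$ gives $|f(t,x,v,0,0)|\le \lambda_0(t,x)+\lambda_1|v|$, so the scalar ODE $\dot\zeta(t)=-(\lambda_1\zeta(t)+\|\lambda_0\|_{\mathbb{L}^\infty})$ with $\zeta(T)=\|\varphi\|_\infty$ dominates $f$ in the sense required by Corollary~\ref{c3:5-002.cor}; together with the symmetric lower bound this yields $\|u\|_{\mathbb{L}^\infty}\le M$ with $M$ depending only on $T$, $\lambda_1$, $\|\lambda_0\|_{\mathbb{L}^\infty}$ and $\|\varphi\|_\infty$. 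Since this bound is uniform over any generator obeying the same linear-growth control, it survives the approximation. Next I would construct Lipschitz generators $f_n$ by a standard inf/sup-convolution in the $(v,p,r)$ variables together with a truncation of the quadratic terms at level $n$ and the replacement of $\gamma(|v|)$ by the constant $\gamma(M)$, arranged so that the $f_n$ are monotone (say increasing) and converge pointwise to $f$ while retaining the growth bound. Each Cauchy--Dirichlet problem with parameters $(f_n,\varphi)$ is then solvable with a unique solution $(u_n,q_n)$ by the Lipschitz theory (cf.\ \cite{DuTang11}), all sharing the bound $\|u_n\|_{\mathbb{L}^\infty}\le M$, and Lemma~\ref{lem2} gives $u_n\le u_{n+1}$.

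The central quantitative step is a uniform energy estimate. I would apply Lemma~\ref{c3:2-lem1} to $\psi(u_n)$ with $f^i=a^{ij}u_{n,x^j}+\sigma^{ik}q_n^k$ and $f^0=f_n$, choosing $\psi$ even with $\psi(y)=\frac{1}{2\beta^2}(e^{2\beta|y|}-2\beta|y|-1)$ on $[-M,M]$ (extended to have bounded derivatives and $\psi'(0)=0$), so that $\psi''(y)=2e^{2\beta|y|}\ge0$ and $|\psi'(y)|=\beta^{-1}(e^{2\beta|y|}-1)$. On the diffusion term, Lemma~\ref{c5:2-2-002} applied with $p=u_{n,x}$, $r=q_n$ bounds the bad quadratic contribution below by $\tfrac{\mu_0}{2}\psi''(u_n)(|u_{n,x}|^2+|q_n|^2)$, while the source term is controlled by $|\psi'(u_n)|\big(\lambda_0+\lambda_1 M+\gamma(M)(|u_{n,x}|^2+|q_n|^2)\big)$. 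Taking $\beta=2\gamma(M)/\mu_0$ makes $\tfrac{\mu_0}{2}\psi''-\gamma(M)|\psi'|\ge\tfrac{\mu_0}{2}$, so the quadratic terms cancel and leave a coercive remainder; this produces bounds on $\|u_n\|_{\mathbb{H}^1_0(\mathcal{D})}$ and $\|q_n\|_{\mathbb{H}^0(\mathcal{D};\mathbb{R}^{d_0})}$ uniform in $n$, as well as the regularity $u_n\in L^2(\Omega;C(\mathcal{T};L^2(\mathcal{D})))$.

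Finally I would pass to the limit. The uniform bounds give weak limits $u_n\rightharpoonup u$, $q_n\rightharpoonup q$, while monotonicity and the $L^\infty$ bound give $u_n\to u$ in $\mathbb{L}^2$ and a.e.; it remains to identify the limit of the quadratic source. The hard part is exactly this: weak convergence of $(u_{n,x},q_n)$ does not suffice to pass to the limit in $f_n(\cdot,u_n,u_{n,x},q_n)$, so I must upgrade to strong convergence of the gradients and of $q_n$. Following Kobylanski's device, I would test the equation for the difference $u_n-u_m$ against an exponential weight of the form $\phi(u_n-u_m)$ and again invoke the coercivity of Lemma~\ref{c5:2-2-002}; the weight absorbs the quadratic terms, the monotone a.e.\ convergence of $u_n$ controls the remaining cross terms by dominated convergence, and one concludes that $(u_n)$ is Cauchy in $\mathbb{H}^1_0(\mathcal{D})$ and $(q_n)$ is Cauchy in $\mathbb{H}^0(\mathcal{D};\mathbb{R}^{d_0})$. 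With strong convergence in hand, $f_n(\cdot,u_n,u_{n,x},q_n)\to f(\cdot,u,u_x,q)$ in $\mathbb{L}^1$, and passing to the limit in the weak formulation \eqref{chap3:weaksol} shows that $(u,q)$ is the desired weak solution.
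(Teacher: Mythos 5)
Your overall architecture (a priori $L^\infty$ bound via comparison, monotone Lipschitz approximation, exponential test functions to absorb the quadratic terms, and a Kobylanski-type strong-convergence argument for the gradients and for $q_n$) matches the skeleton of the paper, and your energy estimate and limit-passage steps correspond closely to Proposition~\ref{c5:2-prp1} and Proposition~\ref{c5:prop.01}. But there is a genuine gap at the pivot of the whole scheme: the construction of a \emph{monotone} sequence of Lipschitz generators approximating $f$. You propose to build the $f_n$ directly from $f$ by ``a standard inf/sup-convolution in the $(v,p,r)$ variables together with a truncation of the quadratic terms at level $n$, arranged so that the $f_n$ are monotone.'' This cannot be arranged as stated. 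Under (\textbf{H}1) the generator satisfies only the two-sided bound $|f|\le\lambda_0+\lambda_1|v|+\gamma(|v|)(|p|^2+|r|^2)$, so $f$ is neither bounded above nor below by a function of linear growth in $(p,r)$; consequently the inf-convolution $\inf_{\xi'}\{f(\xi')+n|\xi-\xi'|\}$ is identically $-\infty$ (and the sup-convolution identically $+\infty$), while an ad hoc truncation of the quadratic part at level $n$ produces a sequence that is in general not monotone in $n$. Without monotonicity you lose the pointwise a.e.\ convergence of $u_n$ and the dominated-convergence step in your strong-convergence argument, and you also lose the uniform $L^\infty$ bound, since the convolved generators need not satisfy $|f_n(t,x,v,0,0)|\le\lambda_0+\lambda_1|v|$.

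The paper's resolution is precisely the point you relegated to a choice of test function: the exponential change of variables is performed on the \emph{equation} itself, not merely inside the It\^o formula. Setting $v=e^{2\lambda u}-1$, $r=2\lambda e^{2\lambda u}q$ transforms the problem into a BSPDE whose generator $\widetilde F$, after a harmless cutoff, satisfies the one-sided bound \eqref{c5:2-2-011}: it is bounded \emph{above} by a function of linear growth in $v+1$ with no quadratic term, because the coercive quadratic form of Lemma~\ref{c5:2-2-002} enters with a favorable sign. For such a one-sidedly linear generator the sup-convolution is finite and yields a decreasing sequence $F^n\searrow\widetilde F$ of Lipschitz generators (Kobylanski's construction), the Lipschitz theory and the comparison principle give a monotone, uniformly bounded sequence $(v^n,r^n)$, and Lemma~\ref{c5:2-lem2} transports these back to solutions $(u^n,q^n)$ of quadratic BSPDEs to which Proposition~\ref{c5:prop.01} applies. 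To repair your proof you would need to insert this change of variables at the level of the equation (and justify, as Lemma~\ref{c5:2-lem2} does via mollification, that the transformed pair is again a weak solution) before attempting any monotone Lipschitz approximation; the rest of your argument then goes through essentially as you describe.
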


\subsection{Boundedness and convergence}

To prove Theorem \ref{c5:2-thm1}, we need to establish a prior estimates. To
the end of this subsection, we first strengthen the condition (ii) in
(\textbf{H}1) to the case
\begin{equation}  \label{c5:2-2-001}
|f(t,x,v,p,r)| \leq \lambda_0(t,x) + \lambda_1 |v| + \lambda \mu_{0} (|p|^2 +
|r|^2),
\end{equation}
where $\lambda$ is a positive constant.

\begin{prop}
\label{c5:2-prp1}
Let~\eqref{c5:sparab},~\eqref{c5:2-2-001}~and~(\textbf{H}2)~be satisfied.
Suppose  $(u,q)\in \mathbb{H}^1_0(\mathcal{D})\times\mathbb{
H}^0(\mathcal{D};\mathbb{R}^{d_0})$~is a weak solution to BSPDE  ~
\eqref{c5:main}-\eqref{c5:con} and in addition $u\in C(\mathcal{T};L^2(
\mathcal{D}))\cap\mathbb{L}^{\infty}$. Then,
\begin{equation}  \label{c5:2-2-004}
\|u(t,\cdot)\|_{L^\infty(\Omega\times\mathcal{D})} \leq \frac{\|\lambda_0\|_{
\mathbb{L}^{\infty}}}{\lambda_1}\big(e^{\lambda_1 (T-t)}-1\big)+
e^{\lambda_1(T-t)}\|\varphi\|_{L^\infty(\Omega\times\mathcal{D})},\ \forall
t\in\mathcal{T}.
\end{equation}
Moreover, there exists a constant $C_1$ depending only on~$\|u\|_{\mathbb{L}
^{\infty}}$,  $\|\varphi(x)\|_{L^2(\Omega\times\mathcal{D})}$, $
\|\lambda_0\|_{\mathbb{L}^{2}}$, $\mu_{0}$, $\lambda$,  $\lambda_1$ and $T$,
such that
\begin{equation}  \label{c5:2-2-005}
\|u_x\|_{\mathbb{H}^0(\mathcal{D})}^2 + \|q\|_{\mathbb{H}^0(\mathcal{D})}^2
\leq C_1.
\end{equation}
\end{prop}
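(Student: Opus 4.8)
The plan is to establish the two estimates separately, each by applying the generalized It\^o formula (Lemma \ref{c3:2-lem1}) to a cleverly chosen function $\psi$ of the solution $u$.

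For the $L^\infty$-bound \eqref{c5:2-2-004}, I would proceed via the comparison-type Corollary \ref{c3:5-002.cor}. The growth condition \eqref{c5:2-2-001} gives, for the comparison function that ignores the gradient and martingale terms, the bound $f(\omega,t,x,v,0,0)\le \lambda_0(t,x)+\lambda_1|v|\le \|\lambda_0\|_{\mathbb{L}^\infty}+\lambda_1 v$ for $v\ge 0$. So I would set $g(t,\zeta)=\|\lambda_0\|_{\mathbb{L}^\infty}+\lambda_1\zeta$ and solve the backward ODE $\dot\zeta(t)=-g(t,\zeta(t))$ with terminal value $\zeta(T)=\|\varphi\|_{L^\infty}$; this linear ODE integrates explicitly to exactly the right-hand side of \eqref{c5:2-2-004}. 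Corollary \ref{c3:5-002.cor} then yields the one-sided bound $u(t,x)\le\zeta(t)$. Applying the identical argument to $-u$ (which solves the BSPDE with parameters $-f(t,x,-v,-p,-r)$, satisfying the same growth bound) gives the matching lower bound, and together these produce the absolute-value estimate. I would denote by $M:=\|u\|_{\mathbb{L}^\infty}$ the resulting bound, which is now a known finite constant for the energy estimate.

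For the energy estimate \eqref{c5:2-2-005}, the natural choice is $\psi(v)=v^2$ (or a smoothed, truncated version to respect the boundedness requirements $\psi',\psi''$ bounded and $\psi'(0)=0$, since $\psi(v)=v^2$ has $\psi'(0)=0$ but unbounded derivatives — here I would apply Lemma \ref{c3:2-lem1} with a cutoff and pass to the limit using the $L^\infty$-bound $M$ already established). Writing the equation in the divergence form of Lemma \ref{c3:2-lem1} with $f^i=a^{ij}u_{x^j}+\sigma^{ik}q^k$ and $f^0=f(s,x,u,u_x,q)$, the It\^o formula with $\psi(v)=v^2$ gives
\begin{equation*}
\int_{\mathcal{D}}u(t)^2\,dx+\int_t^T\!\!\int_{\mathcal{D}}\big[2(a^{ij}u_{x^j}+\sigma^{ik}q^k)u_{x^i}+|q|^2\big]\,dxds=\int_{\mathcal{D}}\varphi^2\,dx+\int_t^T\!\!\int_{\mathcal{D}}2uf\,dxds-\int_t^T\!\!\int_{\mathcal{D}}2uq^k\,dxdW^k_s.
\end{equation*}
The crucial step is the lower bound on the quadratic form: by Lemma \ref{c5:2-2-002}, the integrand $2a^{ij}u_{x^j}u_{x^i}+2\sigma^{ik}u_{x^i}q^k+|q|^2\ge\mu_0(|u_x|^2+|q|^2)$, which is precisely why the constant $\mu_0$ was introduced. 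This coercivity is what controls both $\|u_x\|^2$ and $\|q\|^2$ simultaneously from a single application of It\^o's formula.

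The main obstacle will be the right-hand term $\int 2uf\,dx$, where the quadratic growth of $f$ enters. Using \eqref{c5:2-2-001} and $|u|\le M$, I would bound $2|uf|\le 2M\big(\lambda_0+\lambda_1 M+\lambda\mu_0(|u_x|^2+|q|^2)\big)$. The term $2M\lambda\mu_0(|u_x|^2+|q|^2)$ is the dangerous one: it has the same structure as the coercive left-hand side, so absorption into the $\mu_0(|u_x|^2+|q|^2)$ term only succeeds if the coefficient is strictly smaller — this forces a smallness condition coupling $M$ and $\lambda$. Since $M$ itself depends on $T$ through \eqref{c5:2-2-004}, I expect the argument to require either a restriction to a short time interval, after which one patches finitely many subintervals together, or an exponential weight in time (a Gronwall-type factor $e^{\beta t}$) to absorb the remaining first-order terms. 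I would take expectations to kill the stochastic integral (the integrand being in $\mathbb{L}^2\cdot\mathbb{L}^2$ after truncation, so it is a genuine martingale), estimate the Lebesgue terms $2M\lambda_0$ and $2M^2\lambda_1$ by their $\mathbb{L}^2$ and $L^\infty$ norms, and finally collect the surviving coefficients into the advertised constant $C_1$ depending only on $M$, $\|\varphi\|_{L^2}$, $\|\lambda_0\|_{\mathbb{L}^2}$, $\mu_0$, $\lambda$, $\lambda_1$, and $T$.
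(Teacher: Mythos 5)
Both halves of your proposal run into the same obstruction, and in both cases you reach for a tool that cannot absorb the quadratic term $\lambda\mu_0(|p|^2+|r|^2)$ in \eqref{c5:2-2-001}. For the $L^\infty$-bound, Corollary \ref{c3:5-002.cor} is not available to you: its hypotheses are those of Lemma \ref{lem2}, i.e.\ a \emph{uniformly Lipschitz} generator, which a quadratic $f$ is not (and in the paper the corollary is itself derived from the method of this very proposition, so invoking it here would also be circular). The condition $f(\omega,t,x,\zeta(t),0,0)\le g(t,\zeta(t))$ only controls $f$ at $p=r=0$, whereas the actual solution carries the terms $\lambda\mu_0(|u_x|^2+|q|^2)$; for Lipschitz $f$ the discrepancy is linear and Gronwall-able, but here it is not. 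The paper handles this by applying Lemma \ref{c3:2-lem1} to $\Psi_1(u(t,x)-\xi(t))$ with the exponential test function $\Psi_1(v)=e^{2\lambda v}-(1+2\lambda v+2\lambda^2v^2)$ on $v\ge0$ (and $0$ on $v\le0$), whose key property $\lambda\Psi_1'-\tfrac12\Psi_1''\le0$ makes the combined quadratic contribution $\mu_0(\lambda\Psi_1'-\tfrac12\Psi_1'')(|u_x|^2+|q|^2)$ nonpositive after the coercivity Lemma \ref{c5:2-2-002} is used on the It\^o correction term.

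For the energy estimate you correctly diagnose the difficulty with $\psi(v)=v^2$ — absorption would require $2M\lambda<1$ — but both of your proposed remedies fail. Splitting $[0,T]$ into short subintervals does not help because $M$ is bounded below by $\|\varphi\|_{L^\infty(\Omega\times\mathcal{D})}$ on the last subinterval, so the offending coefficient $2M\lambda\mu_0$ does not shrink with the interval length; and a time-exponential weight $e^{\beta t}$ only absorbs zeroth-order terms of the form $\beta u^2$, not terms quadratic in $(u_x,q)$. The correct device is an exponential weight \emph{in $u$}, not in $t$: the paper takes $\Psi_2(v)=\tfrac12\lambda^{-2}[e^{2\lambda|v|}-(1+2\lambda|v|)]$, for which $\tfrac12\Psi_2''-\lambda|\Psi_2'|\equiv1$, so the coercive term $\mu_0(|u_x|^2+|q|^2)$ survives with coefficient exactly $\mu_0$ after the quadratic part of $f$ is absorbed, and since $\Psi_2(v)\asymp v^2$ and $|\Psi_2'(v)|\asymp|v|$ on $[-M_2,M_2]$ the remaining terms close under Gronwall to give \eqref{c5:2-2-005}. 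This Kobylanski-type exponential change of test function is the missing idea in both parts of your argument.
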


The next result shows that the existence of solution to BSPDE
\eqref{c5:main}-\eqref{c5:con} can be obtained by an approximation scheme.

\begin{prop}
\label{c5:prop.01}  Suppose that a sequence of functions $(f^n)_{n\geq1}$ and
$f$ satisfy (\textbf{H}1)  and that a sequence of functions~$
(\varphi^n)_{n\geq1}$ and $\varphi$ satisfy (\textbf{H}2). Furthermore, we
assume

(a) For every~$(\omega,t,x)$, the sequence $(f^n)_n$ converges to $f$ on  $
\mathbb{R}\times\mathbb{R}^d\times\mathbb{R}^{d_0}$ locally uniformly and the
sequence $(\varphi^n)_n$ converges to $\varphi$  in~$L^{\infty}(\Omega
\times\mathcal{D})$ as $n\rightarrow\infty$.

(b)~There exist a positive constant $\lambda$ and a function  $\lambda_2\in
\mathbb{L}^{\infty}\cap\mathbb{L}^{2}$ such that

$|f^n(t,x,v,p,r)|\leq \lambda_2(t,x) + \lambda \mu_{0}  (|p|^2+|r|^2)$, $
\forall~n\in \mathbb{N},~\forall~(\omega,t,x,v,p,r)$.

(c) For every $n\in\mathbb{N}$, BSPDE with parameters $(f^n,\varphi^n)$ has a
weak solution

$(u^n,q^n)  \in \mathbb{H}^1_0(\mathcal{D})\times\mathbb{H}^0(\mathcal{D};
\mathbb{R}^{d_0}),~~~~  u^n\in L^2(\Omega;C(\mathcal{T};L^2(\mathcal{D}
)))\cap\mathbb{L}^{\infty},$

and $(u^n)_n$ is a monotone sequence. Moreover, there exists a positive
constant $M$ such that  $\|u^n\|_{\mathbb{L}^\infty}\leq M$ for all $n\in
\mathbb{N}$.

Then, BSPDE \eqref{c5:main}-\eqref{c5:con} has a weak solution $(u,q)  \in
\mathbb{H}^1_0(\mathcal{D})\times\mathbb{H}^0(\mathcal{D};\mathbb{R}^{d_0})$
which satisfies
\begin{equation*}
\begin{split}
\displaystyle\lim_{n\rightarrow\infty}u^n&=u\ \text{uniformly on}\
\Omega\times\mathcal{T}\times\mathcal{D}, \\
\displaystyle\lim_{n\rightarrow\infty}q^n&=q\ \text{in}\ \mathbb{H}^0(
\mathcal{D};\mathbb{R}^{d_0}),
\end{split}
\end{equation*}
and moreover $u\in L^2(\Omega;C(\mathcal{T};L^2(\mathcal{D})))\cap\mathbb{L}
^{\infty}$.
\end{prop}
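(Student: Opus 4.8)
The plan is to run a Kobylanski-type monotone stability argument, using the generalized It\^o formula (Lemma \ref{c3:2-lem1}) together with the coercivity inequality \eqref{c5:2-2-0021} to convert the monotone convergence of the first unknowns into strong $\mathbb{H}^0$-convergence of the gradients and the martingale parts. First I would collect uniform estimates. Condition (b) is exactly the strengthened quadratic bound \eqref{c5:2-2-001} with $\lambda_2$ in place of $\lambda_0$, holding uniformly in $n$; combined with $\|u^n\|_{\mathbb{L}^\infty}\le M$ and the (uniform, since $\varphi^n\to\varphi$ in $L^\infty(\Omega\times\mathcal{D})$) $L^2$-bound on $\varphi^n$, Proposition \ref{c5:2-prp1} yields a constant $C_1$ independent of $n$ with $\|u^n_x\|_{\mathbb{H}^0}^2+\|q^n\|_{\mathbb{H}^0}^2\le C_1$. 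As $(u^n)_n$ is monotone and bounded by $M$, it converges $d\mathbb{P}\times dt\times dx$-a.e. to a limit $u$ with $\|u\|_{\mathbb{L}^\infty}\le M$, and by dominated convergence $u^n\to u$ in $\mathbb{L}^p$ for every finite $p$. The uniform bound then lets me extract a subsequence with $u^n\rightharpoonup u$ weakly in $\mathbb{H}^1_0(\mathcal{D})$ and $q^n\rightharpoonup q$ weakly in $\mathbb{H}^0(\mathcal{D};\mathbb{R}^{d_0})$, so that $u\in\mathbb{H}^1_0(\mathcal{D})$.

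The core of the argument is to promote these weak limits to strong $\mathbb{H}^0$-convergence. Assuming without loss that $(u^n)_n$ is nondecreasing, for $n>p$ the difference $w:=u^n-u^p\ge 0$ is a weak solution of the BSPDE with flux $a^{ij}w_{x^j}+\sigma^{ik}(q^n-q^p)^k$, source $\delta f:=f^n(\cdot,u^n,u^n_x,q^n)-f^p(\cdot,u^p,u^p_x,q^p)$ and terminal datum $\varphi^n-\varphi^p$. I choose an even convex $\psi$ with $\psi'(0)=0$ and $\psi',\psi''$ bounded on $[-2M,2M]$, of exponential type so that $\psi''(y)-2\lambda|\psi'(y)|\ge 1$ there, and apply Lemma \ref{c3:2-lem1} to $\psi(w)$. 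Taking expectations kills the martingale, and the flux and second-order terms combine, via \eqref{c5:2-2-0021} and $\psi''\ge 0$, into a left-hand contribution bounded below by $\tfrac{\mu_0}{2}\,\mathbb{E}\int_t^T\!\!\int_{\mathcal{D}}\psi''(w)\,(|w_x|^2+|q^n-q^p|^2)\,dx\,ds$. The terminal term $\int_{\mathcal{D}}\psi(\varphi^n-\varphi^p)\,dx$ vanishes since $\varphi^n\to\varphi$ in $L^\infty$, so only the source term $\mathbb{E}\int_t^T\!\!\int_{\mathcal{D}}\psi'(w)\,\delta f\,dx\,ds$ remains; estimating $|\delta f|\le 2\lambda_2+\lambda\mu_0\big(|u^n_x|^2+|q^n|^2+|u^p_x|^2+|q^p|^2\big)$ by (b), the part carrying $\lambda_2$ tends to zero by dominated convergence.

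The main obstacle is the quadratic part of $\delta f$: it is weighted by the full quantities $|u^n_x|^2+|q^n|^2+\cdots$, which are only uniformly $\mathbb{L}^1$-bounded, rather than by the differences that the coercive term controls, so it cannot be absorbed directly. The remedy is to establish first that $u^n\to u$ uniformly, in the essential-sup sense, on $\Omega\times\mathcal{T}\times\mathcal{D}$: this makes $\|\psi'(w)\|_{L^\infty}=\|\psi'(u^n-u^p)\|_{L^\infty}$ arbitrarily small for large $n,p$, whence $\mathbb{E}\int_t^T\!\!\int_{\mathcal{D}}\psi'(w)\,\lambda\mu_0\big(|u^n_x|^2+\cdots\big)\,dx\,ds\le\|\psi'(w)\|_{L^\infty}\,\lambda\mu_0\,C_1\to 0$. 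I would obtain this uniform convergence from the monotonicity of $(u^n)_n$ together with the a priori $L^\infty$-estimate \eqref{c5:2-2-004} (or a Cole--Hopf/comparison argument resting on Lemma \ref{lem2} and Corollary \ref{c3:5-002.cor}) applied to the difference. This interplay between the uniform convergence of the $u^n$ and the absorption of the quadratic terms is the delicate heart of the proof, exactly as in the one-dimensional quadratic BSDE theory. Granting it, the displayed inequality forces $(u^n_x,q^n)$ to be Cauchy in $\mathbb{H}^0$, so $u^n_x\to u_x$ and $q^n\to q$ strongly, with limits identified with the weak limits above.

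Finally I would pass to the limit in the weak formulation \eqref{chap3:weaksol}. Along a further subsequence $u^n_x\to u_x$ and $q^n\to q$ a.e., and with $u^n\to u$ a.e. and the local uniform convergence $f^n\to f$ from (a), one gets $f^n(\cdot,u^n,u^n_x,q^n)\to f(\cdot,u,u_x,q)$ a.e.; the quadratic domination (b) together with the strong $\mathbb{H}^0$-convergence supplies the uniform integrability needed to pass to the limit in $\mathbb{L}^1$. The flux term converges by the strong gradient/$q$-convergence and the stochastic integral by the strong convergence of $q^n$, so $(u,q)$ is a weak solution of \eqref{c5:main}-\eqref{c5:con} with parameters $(f,\varphi)$. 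The bound $\|u\|_{\mathbb{L}^\infty}\le M$ is inherited from the $u^n$, and applying Lemma \ref{c3:2-lem1} with $\psi(y)=y^2/2$ to $u^n-u$ gives $\mathbb{E}\sup_{t\in\mathcal{T}}\|u^n(t)-u(t)\|_{L^2(\mathcal{D})}^2\to 0$, whence $u\in L^2(\Omega;C(\mathcal{T};L^2(\mathcal{D})))\cap\mathbb{L}^{\infty}$.
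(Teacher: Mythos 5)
Your setup (uniform a priori bounds from Proposition \ref{c5:2-prp1}, monotone pointwise limit, weak compactness in $\mathbb{H}^1$ and $\mathbb{H}^0$) matches the paper, and you have correctly located the crux: in the energy identity for $u^n-u^p$ the quadratic part of $f^n(\cdot,u^n,u^n_x,q^n)-f^p(\cdot,u^p,u^p_x,q^p)$ is weighted by the \emph{full} gradients, which the coercive term cannot absorb. But your remedy --- establish \emph{first} that $u^n\to u$ uniformly on $\Omega\times\mathcal{T}\times\mathcal{D}$ so that $\|\psi'(u^n-u^p)\|_{L^\infty}\to 0$ --- has a genuine gap. Monotone pointwise convergence of uniformly bounded measurable fields does not give uniform convergence (there is no Dini-type compactness available on $\Omega\times\mathcal{T}\times\mathcal{D}$ with the essential supremum), and the comparison/a priori route you suggest does not apply to the difference $w=u^n-u^p$: the source driving $w$ is bounded only by $2\lambda_2+\lambda\mu_0(|u^n_x|^2+|q^n|^2+|u^p_x|^2+|q^p|^2)$, whose quadratic part involves the \emph{individual} gradients rather than $|w_x|^2$ and is not small in $\mathbb{L}^\infty$, so the estimate \eqref{c5:2-2-004} yields no decay in $n,p$. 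In the Kobylanski scheme (and implicitly here) the uniform convergence of $u^n$ is a \emph{consequence} of the strong convergence of the gradients and martingale parts, not a prerequisite; as written your argument is circular at its key step.

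The paper closes this gap by a different decomposition and an iterated limit, with no uniform convergence needed. Writing $\delta^{n,m}_u=u^n-u^m$, it bounds $|u^n_x|^2+|u^m_x|^2$ by a constant times $|(\delta^{n,m}_u)_x|^2+|u^n_x-u_x|^2+|u_x|^2$ (and similarly for $q$), and tests with $\Psi_3(v)=\frac{1}{200\lambda^2}(e^{20\lambda v}-20\lambda v-1)$, chosen so that $\tfrac12\Psi_3''-10\lambda\Psi_3'\equiv 1$. The difference terms $|(\delta^{n,m}_u)_x|^2+|\delta^{n,m}_q|^2$ are absorbed by the coercive $\Psi_3''$ contribution via Lemma \ref{c5:2-2-002}; the terms $|u_x|^2+|q|^2$ form a fixed $\mathbb{L}^1$ function multiplied by the bounded weight $\Psi_3'(\delta^{n,m}_u)$, which vanishes pointwise as $n,m\to\infty$, so dominated convergence applies; and the cross terms $|u^n_x-u_x|^2+|q^n-q|^2$ are handled by first letting $m\to\infty$ along the weakly convergent subsequence and invoking weak lower semicontinuity of the quadratic form, then letting $n\to\infty$. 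If you replace your uniform-convergence step by this decomposition, the remainder of your argument (passage to the limit in the weak formulation via a.e.\ convergence plus domination, and the $L^2(\Omega;C(\mathcal{T};L^2(\mathcal{D})))$ regularity via It\^o's formula applied to $\|u^{n_k}-u^{n_l}\|_{L^2(\mathcal{D})}^2$) goes through essentially as you describe.
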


The proofs of the above two Propositions are both technical and lengthy and
thus are arranged in the appendix section.

\subsection{Change of variables}

This section is devoted to the change of variables between two weak solution.
To be more precise, we justify that the exponential change of variables of a
weak solution to some BSPDE is also a weak solution to another corresponding
BSPDE. This technique is crucial in the proof of Theorem \ref{c5:2-thm1} in the
next section.

We consider the following equaiton
\begin{equation}  \label{c5:2-2-eq1}
\left\{
\begin{array}{l}
dv = -\big[(a^{ij}v_{x^j} + \sigma^{ik}r^k)_{x^i} + F\big]\,dt + r^k dW^k_t,
\\
v|_{\mathcal{T}\times\partial\mathcal{D}} = 0,~~~~v(T) = e^{\lambda \varphi}-1,
\end{array}
\right.
\end{equation}
where $F$ is a function defined on $\Omega\times\mathcal{T}\times\mathcal{D}$ .
Set
\begin{equation}  \label{c5:2-2-007}
u = \frac{1}{\lambda}\ln (v+1),~~~~q = \frac{r}{\lambda(v+1)}.
\end{equation}
Applying It\^o's formula formally to $u$, we obtain $(u,q)$ satisfies the BSPDE
\begin{equation}  \label{c5:2-2-eq2}
\left\{
\begin{array}{l}
du = -\big[(a^{ij}u_{x^j} + \sigma^{ik}q^k)_{x^i} + f(t,x,u,u_x,q)\big]\,dt
+ q^k dW^k_t, \\
u|_{\mathcal{T}\times\partial\mathcal{D}} = 0,~~~~u(T) = \varphi,
\end{array}
\right.
\end{equation}
where
\begin{equation}  \label{c5:2-2-008}
f(t,x,u,u_x,q) = \lambda^{-1}e^{-\lambda u}F(t,x) + \lambda
(a^{ij}u_{x^i}u_{x^j} + \sigma^{ik}u_{x^i}q^k + \frac{1}{2}|q|^2).
\end{equation}

\begin{lem}
\label{c5:2-lem2} Suppose $F$~and~$\varphi$ are both bounded functions. Let $
(v,r)$ be a weak solution to BSPDE \eqref{c5:2-2-eq1} and satisfy ~$
0<\gamma\leq v+1 \leq \Gamma<\infty$, where $\gamma$, $\Gamma$ are constants.
Then the pair of random fields $(u,q)$ defined by \eqref{c5:2-2-007} is a weak
solution to BSPDE \eqref{c5:2-2-eq2}.
\end{lem}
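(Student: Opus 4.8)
The plan is to verify that the pair $(u,q)$ defined in \eqref{c5:2-2-007} satisfies the weak formulation \eqref{chap3:weaksol} for the BSPDE \eqref{c5:2-2-eq2}, by applying the generalized It\^o's formula (Lemma \ref{c3:2-lem1}) to the weak solution $(v,r)$ composed with a suitable $C^2$ function $\psi$ whose derivative mimics $u = \lambda^{-1}\ln(v+1)$. The natural choice is $\psi(y) = \lambda^{-1}\ln(y+1)$, so that $\psi'(y) = \lambda^{-1}(y+1)^{-1}$ and $\psi''(y) = -\lambda^{-1}(y+1)^{-2}$; then $u = \psi(v)$ and, by the chain rule, $u_{x^i} = \psi'(v)v_{x^i} = r^k/(\lambda(v+1))$ matches the gradient relation implicit in $q = r/(\lambda(v+1))$. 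First I would confirm that $\psi$ meets the hypotheses of Lemma \ref{c3:2-lem1}: since $0 < \gamma \leq v+1 \leq \Gamma$, the composition only sees $\psi$ on the compact interval $[\gamma,\Gamma]$ away from the singularity at $y=-1$, so $\psi'$ and $\psi''$ are bounded on the relevant range; one modifies $\psi$ outside $[\gamma-\delta,\Gamma+\delta]$ to make $\psi',\psi''$ globally bounded and arrange $\psi'(0)=0$ if needed (this does not affect the computation since $v$ stays in $[\gamma-1,\Gamma-1]$).

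Next I would identify the data of the weak equation for $(v,r)$ with the generic coefficients $(f^0, f^i, q^k)$ of Lemma \ref{c3:2-lem1}. Reading off \eqref{c5:2-2-eq1}, we have $f^0 = F$, $f^i = a^{ij}v_{x^j} + \sigma^{ik}r^k$, and the martingale integrand $r^k$. Substituting these into the It\^o formula \eqref{c3:2-2-002} yields, for each test function $\eta$,
\begin{equation*}
(u(t),\eta)_0 = (u(T),\eta)_0 + \int_t^T\!\!\int_{\mathcal{D}}\Big[\psi'(v)F - \psi''(v)v_{x^i}(a^{ij}v_{x^j}+\sigma^{ik}r^k) - \tfrac{1}{2}\psi''(v)|r|^2\Big]\eta\,dx\,ds - \int_t^T (\psi'(v)r^k,\eta)_0\,dW^k_s.
\end{equation*}
The martingale term is $\psi'(v)r^k = r^k/(\lambda(v+1)) = q^k$, as required. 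The main work is then algebraic: I must rewrite the drift integrand so that the second-order derivatives of $u$ reassemble into the divergence-form operator $(a^{ij}u_{x^j}+\sigma^{ik}q^k)_{x^i}$ plus the nonlinearity $f$ of \eqref{c5:2-2-008}. Using $u_{x^j} = \psi'(v)v_{x^j}$ and the product-rule expansion of $(a^{ij}u_{x^j})_{x^i}$, the terms $\psi''(v)v_{x^i}a^{ij}v_{x^j}$ produced by It\^o combine with $\lambda a^{ij}u_{x^i}u_{x^j}$ to give exactly the quadratic-gradient contribution, and likewise the cross term $\sigma^{ik}u_{x^i}q^k$ and the $\tfrac12|q|^2$ term emerge from the $\psi''(v)\sigma^{ik}v_{x^i}r^k$ and $-\tfrac12\psi''(v)|r|^2$ pieces after inserting $\psi''=-\lambda(\psi')^2$. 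The relation $\psi''(v) = -\lambda\,\psi'(v)^2$ is the key identity that makes the exponential change of variables close: it converts the second-order It\^o correction into precisely the quadratic term $\lambda(a^{ij}u_{x^i}u_{x^j}+\sigma^{ik}u_{x^i}q^k+\tfrac12|q|^2)$ in \eqref{c5:2-2-008}. I would also check $\psi'(v)F = \lambda^{-1}(v+1)^{-1}F = \lambda^{-1}e^{-\lambda u}F$, recovering the remaining term of $f$.

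The step I expect to be the genuine obstacle is \emph{not} the pointwise algebra but the justification that the resulting drift integrand is integrable and that the equation \eqref{chap3:weaksol} holds as an identity in the correct space, i.e.\ that $(u,q)$ indeed lies in $\mathbb{H}^1_0(\mathcal{D})\times\mathbb{H}^0(\mathcal{D};\mathbb{R}^{d_0})$ so that the weak formulation is even well-posed. Specifically, I must argue that $u_x \in \mathbb{H}^0(\mathcal{D})$ and that the nonlinearity $f(t,x,u,u_x,q)$ is integrable: since $v\in\mathbb{H}^1_0$ and $v+1$ is bounded below by $\gamma>0$, the factor $\psi'(v)=\lambda^{-1}(v+1)^{-1}$ is bounded, whence $u_{x^i}=\psi'(v)v_{x^i}\in\mathbb{H}^0$ and $u\in\mathbb{H}^1_0$ (the Dirichlet boundary condition $u|_{\partial\mathcal{D}}=0$ follows from $v|_{\partial\mathcal{D}}=0$ and $\psi(0)=0$). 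The quadratic terms $a^{ij}u_{x^i}u_{x^j}$, $\sigma^{ik}u_{x^i}q^k$, and $|q|^2$ are then products of $\mathbb{H}^0$ fields with bounded coefficients and the bounded factor $(v+1)^{-1}$, so they are in $\mathbb{L}^1(\mathcal{D})$, matching the integrability required to read \eqref{c3:2-2-002} term by term. With boundedness of $\psi',\psi''$ secured and the drift identified summand by summand with $-(a^{ij}u_{x^j}+\sigma^{ik}q^k)_{x^i}-f$ tested against $\eta$ (moving the divergence onto $\eta$ via integration by parts, which is how the term $-(a^{ij}u_{x^j}+\sigma^{ik}q^k)\eta_{x^i}$ appears in Definition \ref{c3:ws.def}), the weak formulation \eqref{chap3:weaksol} is established and the lemma follows. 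Finally $u(T)=\psi(v(T))=\lambda^{-1}\ln(e^{\lambda\varphi}-1+1)=\varphi$ matches the terminal condition.
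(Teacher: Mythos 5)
Your pointwise algebra is the same as the paper's (the identity $\psi''=-\lambda(\psi')^2$ driving the exponential change of variables, the identification of the martingale term with $q^k$, and the integrability checks via $\gamma\leq v+1\leq\Gamma$), but the step where you invoke Lemma \ref{c3:2-lem1} contains a genuine gap. That lemma's \emph{conclusion} \eqref{c3:2-2-002} is an identity for $\int_{\mathcal{D}}\psi(u(t,x))\,dx$ only — the spatial integral over all of $\mathcal{D}$, with no test function. You write that substituting into \eqref{c3:2-2-002} ``yields, for each test function $\eta$'' the pairing $(u(t),\eta)_0=\cdots$; it does not. The weak formulation \eqref{chap3:weaksol} for the transformed pair requires exactly this tested identity for every $\eta\in C_0^\infty(\mathcal{D})$, and producing it is the whole content of the lemma: it is a chain rule for weak solutions, not an energy identity. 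The paper proves it by mollifying $v$ and $r$ spatially on $K=\supp\eta$, so that for each fixed $x\in K$ the pair $(v^\varepsilon(\cdot,x),r^\varepsilon(\cdot,x))$ is a genuine real-valued semimartingale to which the classical It\^o formula applies; one then multiplies by $\eta$, integrates, uses Green's formula, and passes to the limit $\varepsilon\to0$ using strong $L^p$ convergence of mollifications. Your proposal skips precisely this regularization, which is the non-algebraic core of the argument.

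A second, smaller but still real, obstruction: Lemma \ref{c3:2-lem1} requires $\psi'(0)=0$, and $\psi(y)=\lambda^{-1}\ln(y+1)$ has $\psi'(0)=\lambda^{-1}\neq0$. You propose to modify $\psi$ outside $[\gamma-1,\Gamma-1]$, but since $v$ vanishes on $\partial\mathcal{D}$ (and $\gamma\leq1\leq\Gamma$), the point $0$ lies in the essential range of $v$, so no modification away from that range can restore $\psi'(0)=0$. The condition is not cosmetic — it is what lets $\psi'(u)$ inherit the zero boundary trace so that the integration by parts against $f^i$ produces no boundary term. The mollification-plus-compact-support argument of the paper sidesteps this entirely because everything is localized on $K\Subset\mathcal{D}$ via $\eta$. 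To repair your proof you would either need to prove a tested, interior version of the It\^o formula (which is essentially the paper's proof) or cite one; as written, the key step does not follow from the lemma you invoke.
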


\begin{proof}
For any given test function $\eta\in C^{\infty}_0(\cD)$, set $K :=\supp
(\eta)$, $\eps_0 = \textrm{dist}(K,\ptl\cD)$.

We further choose a nonnegative function $\zeta\in C^{\infty}_0(\cD)$ such
that:
$$\supp (\zeta) \subset \{|x| \leq 1\},~~\int_{\R^d}\zeta(x)dx = 1
,$$ and define
\begin{equation*}
  \begin{split}
  \zeta^{\eps}(x)&~=
\eps^{-d}\zeta(x/\eps),\ \ \forall\eps \in (0,\eps_0),\\
v^{\eps}(x)&~= \zeta^{\eps} * v (x),\ \forall x\in K,\\
r^{\eps}(x)&~= \zeta^{\eps}*r (x),\ \forall x\in K.
\end{split}
\end{equation*}

Since $(v,r)$ is a weak solution to equation \eqref{c5:2-2-eq1}, we know from
the definition of $(v^{\eps},r^{\eps})$ that the pair $(v^{\eps},r^{\eps})$
satisfies£º
\begin{eqnarray*}
  \begin{split}
    v^{\eps}(t,x) = ~&v^{\eps}(T,x) + \int_{t}^T \Big\{D_i\big[\zeta^{\eps}*
    (a^{ij}D_j v + \sigma^{ik}r^k)\big] + \zeta^{\eps}*F\Big\}(s,x)ds \\&-
    \int_t^T r^{\eps,k} (s,x) dW^k_s,\ \forall(t,x)\in\cT\times K.
  \end{split}
\end{eqnarray*}

Setting
\begin{equation*}
  u^{\eps} = \frac{1}{\lambda}\ln(v^{\eps}+1),~~~~
  q^{\eps} = \frac{r^{\eps}}{\lambda(v^{\eps}+1)},
\end{equation*}
then we have $u^{\eps}_x = \frac{v^{\eps}_x}{\lambda(v^{\eps}+1)}$. Applying
It\^{o}'s formula to $u^{\eps}$, we get
\begin{eqnarray*}
  \begin{split}
    &u^{\eps}(t,x) - u^{\eps}(T,x)\\
    &= \int_t^T \frac{1}{\lambda(v^{\eps}+1)}\cdot
    \Big\{D_i\big[\zeta^{\eps}*
    (a^{ij}D_j v + \sigma^{ik}r^k)\big] + \zeta^{\eps}*F\Big\}(s,x)ds\\
    &~~~~ +\int_t^T \frac{\lambda}{2}|q^{\eps}(t,x)|^2 dt
    - \int_t^T q^{\eps}(s,x) dW_s,\ \ \forall(t,x)\in\cT\times K.
  \end{split}
\end{eqnarray*}
Multiplying $\eta$ on both sides of the above equality and integrating over
$K$, applying Fubini's theorem and the fact that $K =\supp (\eta)$, we have
\begin{eqnarray}
  \begin{split}
    &\int_{K} u^{\eps}(t,x)\eta(x)\,dx - \int_{K}u^{\eps}(T,x)\eta(x)\,dx\\
    &= \int_t^T \!\!\int_{K}\frac{1}{\lambda(v^{\eps}+1)}\cdot
    \Big\{D_i\big[\zeta^{\eps}*
    (a^{ij}D_j v + \sigma^{ik}r^k)\big] + \zeta^{\eps}*F\Big\}(s,x)\eta(x)\,dxds\\
    &~~~~ +\frac{\lambda}{2} \int_t^T\!\!\int_{K} |q^{\eps}(s,x)|^2\eta(x) \,dxds
    - \int_t^T\!\!\int_{K} q^{\eps}(s,x)\eta(x) \,dxdW_s.
  \end{split}
\end{eqnarray}
Green's formula yields
\begin{eqnarray}\label{c5:2-2-010}
  \begin{split}
    &\int_{K} u^{\eps}(t,x)\eta(x)\,dx - \int_{K}u^{\eps}(T,x)\eta(x)\,dx\\
    &= - \int_t^T \!\!\int_{K}
    \big[\zeta^{\eps}*
    (a^{ij}v_{x^j} + \sigma^{ik}r^k)\big]
    \frac{\ptl}{\ptl x^i}\Big[\frac{\eta}{\lambda(v^{\eps}+1)}\Big](s,x) \,dxds\\
    &~~~~ +\int_t^T\!\!\int_{K}\Big\{
    \frac{1}{\lambda(v^{\eps}+1)}(\zeta^{\eps}*F) + \frac{\lambda}{2}
    |q^{\eps}|^2\Big\}(s,x)\eta(x)\, ds
    - \int_t^T\!\!\int_{K} q^{\eps}(s,x)\eta(x) \,dxdW_s.
  \end{split}
\end{eqnarray}

In what follows we will take limits as $\eps\rrow 0$ on both sides of
\eqref{c5:2-2-010}. First, for every $h\in L^{p}(\R^d)$ with $p\in[1,\infty))$,
$\zeta^{\eps}*h$~converges strongly to $h$ in $L^{p}(\R^d)$ as $\eps\rightarrow
0$. Moreover, we know that

(i) It is easy to verify that $\lambda^{-1}\ln\gamma\leq u \leq
\lambda^{-1}\ln\Gamma$;

(i) The fact that $(v,r)$ is a weak solution to equation \eqref{c5:2-2-eq1}
indicates that $v\in \bH^1(\cD), r\in\bH^0(\cD)$, which implies $a^{ij}v_{x^j}
+ \sigma^{ik}r^k\in \bH^0(\cD)$;

(iii) Since $v$ is bounded, $q\in\bH^0(\cD)$.\\
Then, letting $\eps\rrow 0$ in the equality \eqref{c5:2-2-010}, we have
\begin{eqnarray}
  \begin{split}
    &\int_{K} u(t,x)\eta(x)\,dx - \int_{K}u(T,x)\eta(x)\,dx\\
=&~- \int_t^T \!\!\int_{K}
    (a^{ij}v_{x^j} + \sigma^{ik}r^k)
    \frac{\ptl}{\ptl x^i}\Big[\frac{\eta}{\lambda(v+1)}\Big](s,x) \,dxds\\
    &~+ \int_t^T\!\!\int_{K}\Big\{
    \frac{F}{\lambda(v+1)} + \frac{\lambda}{2}
    |q|^2\Big\}(s,x)\eta(x) ds
    - \int_t^T\!\!\int_{K} q(s,x)\eta(x) \,dxdW_s.
  \end{split}
\end{eqnarray}
Substituting $e^{\lambda u}-1$ and $\lambda e^{\lambda u} q$ for $v$ and $r$
respectively, we have
\begin{eqnarray}\label{c5:2-2-009}
  \begin{split}
    &\int_{\cD} u(t,x)\eta(x)\,dx - \int_{\cD}u(T,x)\eta(x)\,dx\\
=&~- \int_t^T \!\!\int_{\cD}
    (a^{ij}u_{x^j} + \sigma^{ik}q^k)(s,x)
    \eta_{x^i}(x) \,dxds\\
    &~+ \int_t^T\!\!\int_{\cD}f(s,x,u,u_x,q)\eta(x) dxds
    - \int_t^T\!\!\int_{\cD} q(s,x)\eta(x) \,dxdW_s,
  \end{split}
\end{eqnarray}
where $f(s,x,u,u_x,q)$ is given in \eqref{c5:2-2-008}.

We can deduce from the arbitrariness of $\eta$ in \eqref{c5:2-2-009} that
$(u,q)$ is a weak solution to equation \eqref{c5:2-2-eq2}.
\end{proof}

\subsection{Proof of Theorem \protect\ref{c5:2-thm1}}

We are now in a position to prove Theorem \ref{c5:2-thm1}. We first assume
\eqref{c5:2-2-001} holds, i.e.,

\begin{center}
$|f(t,x,v,p,r)| \leq \lambda_0(t,x) + \lambda_1 |v| + \lambda \mu_{0} (|p|^2 +
|r|^2).$
\end{center}

Denote $M = \frac{\|\lambda_0\|_{\mathbb{L}^{\infty}}}{\lambda_1} (e^{\lambda_1
T}-1)+  e^{\lambda_1T}\|\varphi\|_{L^\infty(\Omega\times \mathcal{D})} $. By
Proposition \ref{c5:2-prp1}, if $(u,q)$ is a weak solution to BSPDE
\eqref{c5:main}-\eqref{c5:con} and $u$ is bounded, we have $u(\cdot)\leq M$.

Set
\begin{equation*}
v = e^{2\lambda u} - 1,~~~~r = 2\lambda e^{2\lambda u} q.
\end{equation*}
Then $(v,r)$ formally satisfies
\begin{equation}
\left\{
\begin{array}{l}
dv = -\big[(a^{ij}v_{x^j} + \sigma^{ik}r^k)_{x^i} + F(t,x,v,v_x,r)\big]\,dt
+ r^k dW^k_t, \\
v|_{\mathcal{T}\times\partial\mathcal{D}} = 0,~~~~v(T) = e^{2\lambda
\varphi}-1,
\end{array}
\right.
\end{equation}
where
\begin{eqnarray*}
\begin{split}
F(t,x,v,p,r) = ~& 2\lambda(v+1)f\bigg(t,x,\frac{1}{2\lambda}\ln(v+1), \frac{p
}{2\lambda(v+1)},\frac{r}{2\lambda(v+1)}\bigg) \\
&-\frac{1}{2(v+1)}(2 a^{ij}p^ip^j + 2\sigma^{ik}p^ir^k + |r|^2).
\end{split}
\end{eqnarray*}

Take a function $\psi\in C^\infty$ such that
\begin{equation*}
\psi(z) = \left\{
\begin{array}{ll}
1,~~~~ & z\in [e^{-2\lambda M},e^{2\lambda M}], \\
0,~~~~ & z\notin [e^{-2\lambda (M+1)},e^{2\lambda (M+1)}],
\end{array}
\right.
\end{equation*}
and denote $\widetilde{F}(t,x,v,p,r) = \psi(v+1)F(t,x,v,p,r)$. From
\eqref{c5:2-2-001} and Lemma \ref{c5:2-2-002}, we know
\begin{equation}  \label{c5:2-2-011}
\begin{split}
&-\psi(v+1)\Big[2\lambda(\|\lambda_0\|_{\mathbb{L}^{\infty}} + \lambda_1 +
\lambda_1 M) (v+1) + \frac{C_{K,\mu_0}}{v+1}(|p|^2 + |r|^2)\Big] \\
\leq&~ \widetilde{F}(t,x,v,p,r) \leq~ 2\lambda(\|\lambda_0\|_{\mathbb{L}
^{\infty}} + \lambda_1 + \lambda_1 M) \psi(v+1) (v+1),
\end{split}
\end{equation}
where $C_{K,\mu_0}$ is a constant depending on $K$ and $\mu_0$ .

Using the same method as \cite[pp.~572~]{Koby00}, we can construct a sequence
of functions $\{F^{n}(t,x,v,p,r):n\geq 1\}$ such that

(a) For every $n$ and any $(\omega,t,x)$, $F^{n}(t,x,v,p,r)$ is uniformly
Lipschitz continuous with respect to $(v,p,r)$.

(b) The sequence $(F^{n})_n$ is decreasing, and for almost every $
(\omega,t,x)$, $F^{n}(t,x,v,p,r)$ locally uniformly converges to $\widetilde{
F}(t,x,v,p,r)$ on $\mathbb{R}\times\mathbb{R}^d\times\mathbb{R}^{d_0}$.
Additionally,
\begin{equation*}
\begin{split}
\widetilde{F}(t,x,v,p,r)&~\leq F^{n}(t,x,v,p,r) \\
&~\leq 2\lambda(\|\lambda_0\|_{\mathbb{L}^{\infty}} + \lambda_1 + \lambda_1 M)
\psi(v+1) (v+1) + 2^{-n}.
\end{split}
\end{equation*}

By Lemma 2.3 in \cite{DuTang11}, BSPDE
\begin{equation*}
\left\{
\begin{array}{l}
dv^{n} = -\big[(a^{ij}v^{n}_{x^j} + \sigma^{ik}r^{n,k})_{x^i} +
F^{n}(t,x,v^{n},v^{n}_x,r^{n})\big]\,dt + r^{n,k} dW^k_t, \\
v^{n}|_{\mathcal{T}\times\partial\mathcal{D}} = 0,~~~~v^{n}(T) = e^{2\lambda
\varphi}-1,
\end{array}
\right.
\end{equation*}
has a unique weak solution $(v^{n},r^{n})\in \mathbb{H}^{1}_0(\mathcal{D}
)\times\mathbb{H}^0(\mathcal{D})$. Moreover, on account of Lemma \ref{lem2}, we
know that for every $n\in \mathbb{N}$, $v^{n+1}\leq v^{n}$. On the other hand,
applying Corollary \ref{c3:5-002.cor} and meanwhile noticing inequality
\eqref{c5:2-2-011}, we have

\begin{center}
$e^{-2\lambda (M+1)}-1 \leq v^n \leq e^{2\lambda (M+1)}.$
\end{center}

Setting
\begin{equation*}
u^n ~=~\displaystyle\frac{\ln (v^n+1)}{2\lambda},~~~~~~  q^n~=~\displaystyle
\frac{r^n}{2\lambda (v^n+1)},
\end{equation*}
we can deduce from Lemma \ref{c5:2-lem2} that $(u^n,q^n)$ is a weak solution to
the equation
\begin{equation*}
\left\{
\begin{array}{l}
du^{n} = -\big[(a^{ij}u^{n}_{x^j} + \sigma^{ik}q^{n,k})_{x^i} +
f^{n}(t,x,u^{n},u^{n}_x,q^{n})\big]\,dt + q^{n,k} dW^k_t, \\
u^{n}|_{\mathcal{T}\times\partial\mathcal{D}} = 0,~~~~u^{n}(T) = \varphi,
\end{array}
\right.
\end{equation*}
where
\begin{align*}
f^{n}(t,x,v,p,r) ~=~ & \frac{1}{2\lambda e^{2\lambda v}} F^n(t,x,e^{2\lambda
v}-1,2\lambda e^{2\lambda v} p,2\lambda e^{2\lambda v} r) \\
&+\lambda (2 a^{ij}p^ip^j + 2\sigma^{ik}p^i r^k + |r|^2).
\end{align*}
In view of the properties (a) and (b) of $F^n$, it is easy to verify that $
f^n$ satisfies the conditions in Proposition \ref{c5:prop.01} and the
corresponding limit is
\begin{align*}
&\widetilde{f}(t,x,v,p,r) \\
&= ~ \frac{1}{2\lambda e^{2\lambda v}} \widetilde{F}(t,x,e^{2\lambda
v}-1,2\lambda e^{2\lambda v} p,2\lambda e^{2\lambda v} r) +\lambda (2
a^{ij}p^ip^j + 2\sigma^{ik}p^i r^k + |r|^2) \\
&=~ \psi(e^{2\lambda v})f(t,x,v,p,r) + [1-\psi(e^{2\lambda v})]\lambda (2
a^{ij}p^ip^j + 2\sigma^{ik}p^i r^k + |r|^2).
\end{align*}
Therefore, from Proposition \ref{c5:prop.01} we know the following equation
\begin{equation*}
\left\{
\begin{array}{l}
d\widetilde{u} = -\big[(a^{ij}\widetilde{u}_{x^j} + \sigma^{ik}\widetilde{q}
^{k})_{x^i} + \widetilde{f}(t,x,\widetilde{u},\widetilde{u}_x,\widetilde{q})
\big]\,dt + \widetilde{q}^{k} dW^k_t, \\
\widetilde{u}|_{\mathcal{T}\times\partial\mathcal{D}} = 0,~~~~\widetilde{u} (T)
= \varphi,
\end{array}
\right.
\end{equation*}
has at least a weak solution $(\widetilde{u},\widetilde{q})\in \mathbb{H}
^1_0(\mathcal{D})\cap\mathbb{L}^{\infty}\times\mathbb{H}^0(\mathcal{D};
\mathbb{R}^{d_0})$. Furthermore, according to Proposition \ref{c5:2-prp1} we
have $|\widetilde{u}(\cdot)|\leq M$. We notice that when $|v|\leq M$,

\begin{center}
$\widetilde{f}(t,x,v,p,r) = {f}(t,x,v,p,r).$
\end{center}
Therefore $(\widetilde{u},\widetilde{q})$ is also a weak solution to BSPDE
\eqref{c5:main}-\eqref{c5:con}.

Finally we prove the existence of solution to BSPDE \eqref{c5:main}-
\eqref{c5:con} in the general case, i.e., condition \eqref{c5:2-2-001} is
replaced by (\textbf{H}1),

\begin{center}
$|f(t,x,v,p,r)| \leq \lambda_0(t,x) + \lambda_1 |v| + \gamma(|v|)(|p|^2 +
|r|^2).$
\end{center}

Since the estimate \eqref{c5:2-2-004} in Proposition \ref{c5:2-prp1} is
independent of $\lambda$, we can use the truncation technique to complete the
proof.

Denote the sets
\begin{align*}
&E^+:= \{(\omega,t,x,v,p,r)~|~ f > \lambda_0 + \lambda_1 |v| +
\gamma(M)(|p|^2 + |r|^2)\}, \\
&E^-:= \{(\omega,t,x,v,p,r)~| - f > \lambda_0 + \lambda_1 |v| + \gamma(M)(|p|^2
+ |r|^2)\},
\end{align*}
and recall $M = \frac{\|\lambda_0\|_{\mathbb{L}^{\infty}}}{\lambda_1}
(e^{\lambda_1 T}-1)+  e^{\lambda_1T}\|\varphi\|_{L^\infty(\Omega\times
\mathcal{D})}$. Let
\begin{equation*}
\widehat{f}(\omega,t,x,v,p,r)=\left\{
\begin{array}{ll}
\lambda_0 + \lambda_1 |v| + \gamma(M)(|p|^2 + |r|^2),~~ &
(\omega,t,x,v,p,r)\in E^+, \\
{f}(\omega,t,x,v,p,r), & (\omega,t,x,v,p,r)\notin E^+\cup E^-, \\
-\lambda_0 -\lambda_1 |v| - \gamma(M)(|p|^2 + |r|^2),~~ & (\omega,t,x,v,p,r)\in
E^-.
\end{array}
\right.
\end{equation*}
Obviously $\widehat{f}$ satisfies condition \eqref{c5:2-2-001}. It follows from
the previous arguments that BSPDE
\begin{equation*}
\left\{
\begin{array}{l}
d\widehat{u} = -\big[(a^{ij}\widehat{u}_{x^j} + \sigma^{ik}\widehat{q}
^{k})_{x^i} + \widehat{f}(t,x,\widehat{u},\widehat{u}_x,\widehat{q})\big]
\,dt + \widehat{q}^{k} dW^k_t, \\
\widehat{u}|_{\mathcal{T}\times\partial\mathcal{D}} = 0,~~~~\widehat{u}(T) =
\varphi,
\end{array}
\right.
\end{equation*}
has at least a weak solution $(\widehat{u},\widehat{q}) \in \mathbb{H}^1_0(
\mathcal{D})\times\mathbb{H}^0(\mathcal{D};\mathbb{R}^{d_0})$~and~$|\widehat{
u}|\leq M$. However, when~$|v|\leq M$,

\begin{center}
$\widehat{f}(t,x,v,p,r) = {f}(t,x,v,p,r).$
\end{center}

Therefore, $(\widehat{u},\widehat{q})$ is also a weak solution to BSPDE
\eqref{c5:main}-\eqref{c5:con}. The proof is complete.

\section{The uniqueness of solutions}

Let $M>0$ be a fixed constant. For simplicity, we denote by $z = (p,r)\in
\mathbb{R}^{d+d_0}$ for vectors $p\in\mathbb{R}^d$ and $r\in\mathbb{R}^{d_0}$ .
We assume $f(\omega,t,x,u,z) = f(\omega,t,x,u,p,r)$ satisfies

(\textbf{H}3) there exist functions $l(\cdot)\in L^1(\mathcal{T}\times
\mathcal{D})\cap L^\infty(\mathcal{T}\times\mathcal{D}),\ k(\cdot)\in L^2(
\mathcal{T})$ and a positive constant $\Lambda$ such that for any $(t,x)\in
\mathcal{T}\times\mathcal{D}$, $u\in[-M,M]$, and $z\in \mathbb{R}^{d+d_0}$,
\begin{gather*}
|f(t,x,u,z)|\leq l(t) + \Lambda |z|^2,~~a.s., \\
|f_z(t,x,u,z)| \leq k(t) + \Lambda |z|,~~a.s..
\end{gather*}
Moreover, for any $\varepsilon > 0$, there exists $l_\varepsilon(\cdot)\in
L^1(\mathcal{T})$ such that for every $(t,x)\in\mathcal{T}\times\mathcal{D}$ ,
$u\in\mathbb{R}$, $z\in \mathbb{R}^{d+d_0}$,
\begin{equation*}
|f_u(t,x,u,z)|  \leq l_{\varepsilon}(t) + \varepsilon|z|^2,~~a.s..
\end{equation*}

The main theorem of this section concerns the uniqueness of solutions to BSPDE
\eqref{c5:main}-\eqref{c5:con}.

\begin{thm}
\label{c5:3-thm1}  Let condition (\textbf{H}3) be satisfied. Suppose $
(u^1,q^1)$ and $(u^2,q^2)$ are both weak solutions to BSPDE \eqref{c5:main}-
\eqref{c5:con} and $|u^1|,\ |u^2|\leq M$. Then $u^1 = u^2$.
\end{thm}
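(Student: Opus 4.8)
The plan is to linearize the difference and run a generalized-It\^o energy estimate, controlling the second-order terms by the super-parabolicity bound of Lemma \ref{c5:2-2-002}. Write $\delta u = u^1 - u^2$, $\delta q = q^1 - q^2$, and $z^i=(u^i_x,q^i)$. Subtracting the two copies of \eqref{chap3:weaksol} and using the differentiability of $f$ granted by (\textbf{H}3), I would represent the generator difference along the segment joining $(u^2,z^2)$ to $(u^1,z^1)$ as
\begin{equation*}
f(t,x,u^1,z^1)-f(t,x,u^2,z^2) = A\,\delta u + B^j\,\delta u_{x^j} + C^k\,\delta q^k,
\end{equation*}
where $A=\int_0^1 f_u\,d\theta$, $B^j=\int_0^1 f_{p^j}\,d\theta$, $C^k=\int_0^1 f_{r^k}\,d\theta$. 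By (\textbf{H}3), $|A|\leq l_\eps(t)+\eps(|z^1|^2+|z^2|^2)$ for every $\eps>0$, while $|B|,|C|\leq k(t)+\Lambda(|z^1|+|z^2|)$. Thus $(\delta u,\delta q)$ is a weak solution of a \emph{linear} BSPDE with these coefficients and with zero terminal and lateral data, and the goal is to conclude $\delta u\equiv 0$.

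First I would apply Lemma \ref{c3:2-lem1} to $\psi(\delta u)$ with $\psi(x)=x^2$ (so $\psi(0)=\psi'(0)=0$ and $\psi''\equiv 2$), after noting that $\delta u$, being a weak solution difference, admits a continuous $L^2(\cD)$-version so that the hypotheses of Lemma \ref{c3:2-lem1} hold. The divergence part produces the quadratic form $a^{ij}\delta u_{x^i}\delta u_{x^j}+\sigma^{ik}\delta u_{x^i}\delta q^k+\tfrac12|\delta q|^2$, which by Lemma \ref{c5:2-2-002} dominates $\tfrac{\mu_0}{2}(|\delta u_x|^2+|\delta q|^2)$; this is the coercive good term. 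Taking expectations (the stochastic integral is a genuine martingale since $\delta u$ is bounded and $\delta q\in\bH^0$) and setting $Y(t)=\E\|\delta u(t)\|_{L^2(\cD)}^2$, I obtain
\begin{equation*}
Y(t)+\mu_0\,\E\!\int_t^T\!\!\int_\cD(|\delta u_x|^2+|\delta q|^2) \leq 2\,\E\!\int_t^T\!\!\int_\cD\big(A\,(\delta u)^2 + \delta u\,B^j\delta u_{x^j} + \delta u\,C^k\delta q^k\big).
\end{equation*}
The zeroth-order term is benign: since $|\delta u|\leq 2M$, its $l_\eps(t)$ part is a Gronwall coefficient in $L^1(\cT)$, and its $\eps(|z^1|^2+|z^2|^2)$ part is a remainder controlled by the finite a priori energy $\E\int_0^T\!\int_\cD(|z^i|^2)<\infty$ (from the membership $(u^i,q^i)\in\bH^1_0(\cD)\times\bH^0(\cD;\R^{d_0})$) and the freedom in $\eps$.

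The hard part will be the first-order term $\delta u\,B^j\delta u_{x^j}$ and the martingale-coupling term $\delta u\,C^k\delta q^k$, whose coefficients are only of size $k(t)+\Lambda(|z^1|+|z^2|)$, i.e. square-integrable but \emph{unbounded} in $(t,x)$. Absorbing them into the coercive term by Young's inequality leaves a residual of the form $\tfrac{\Lambda^2}{\mu_0}\,\E\int_t^T\!\int_\cD(\delta u)^2(|z^1|^2+|z^2|^2)$ whose prefactor does \emph{not} shrink with $\eps$ and whose $|z|^2$-weight sits inside the spatial integral, so it cannot be bounded by an $L^1(\cT)$ multiple of $Y(t)$ and Gronwall does not close. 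This is exactly where, for quadratic BSDEs, one invokes the BMO property of $\int|z|^2$ and removes the $\delta q$-coupling by a Girsanov change of measure; that device is unavailable here, because $C^k=C^k(t,x)$ depends on the spatial variable and a finite-dimensional change of measure on the scalar noise $W^k$ cannot cancel $\int_\cD\delta u\,C^k\delta q^k\,dx$.

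To circumvent this I would exploit the exponential change of variables that drives the paper, rather than estimating $\delta u$ directly. Passing each solution to $e^{\gamma u^i}$ (equivalently working with the transformed equation \eqref{c5:2-2-eq1} through Lemma \ref{c5:2-lem2}), the first-order and coupling contributions coming from the genuine quadratic part of the drift recombine, via Lemma \ref{c5:2-2-002}, into a \emph{dissipative} term of the correct sign once $\gamma$ is chosen large relative to $\Lambda/\mu_0$; the surviving quadratic-in-$z$ terms then carry a favorable prefactor, the leftover lower-order dependence is governed by the sub-quadratic bound on $f_u$ together with the $L^1(\cT)$ weights $l_\eps,k^2$, and the a priori energy keeps all remainders integrable. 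A Gronwall argument applied to the energy of the transformed difference then forces it to vanish, whence $u^1=u^2$. Making this transform-and-close step rigorous --- in particular checking that the induced dissipation beats the quadratic residual uniformly in the unbounded martingale parts --- is the crux of the argument.
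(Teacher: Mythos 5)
Your linearization of $f$ along the segment and your diagnosis of why the plain $L^2$ energy estimate fails are both exactly right, and the paper does take the route you point to: a change of variables $u=\phi(\wt u)$ followed by an It\^o--Gronwall estimate on the transformed difference. But the two concrete devices that make this work are missing from your sketch, and the transform you name would fail. The Cole--Hopf map $v=e^{\gamma u}$ (the one behind Lemma \ref{c5:2-lem2} and the existence proof) is the wrong transform for uniqueness. Writing $w=\phi'\circ\phi^{-1}$, the transformed generator $F$ satisfies $F_{\wt u}=\frac{w''}{2w}\la Az,z\ra+\frac{w'}{w}(zf_z-f)+f_u$ and $F_{\wt z}=f_z+\frac{w'}{w}z$, and the dissipation you need amounts to the inequality $\frac{\mu_{0}}{2}\frac{w''}{w}+2\Lambda\frac{w'}{w}+\big(\frac{w'}{w}\big)^2\le-\delta<0$ together with $w>0$, $w'>0$, $w''<0$ on $[-M,M]$. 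A pure exponential has $w'$ and $w''$ of the same sign, so no choice of $\gamma$ produces this; the paper instead takes the concave increasing weight $w(u)=B-e^{-\beta(u+M)}$, i.e.\ $\phi(\wt u)=\frac{1}{\beta}\ln\frac{(Be^{\beta M}-1)e^{\beta B\wt u}+1}{Be^{\beta M}}$, with $B,\beta$ tuned to $\Lambda$ and $\mu_{0}$.

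Second, even with the right transform the $\psi(x)=x^2$ energy does not close. The transform only delivers the one-sided structural bound $F_{\wt u}+a|F_{\wt z}|^2\le b(t)$ with $b\in L^1(\cT)$ for \emph{small} $a$ (the contribution $a\big(\Lambda+\frac{w'}{w}\big)^2|z|^2$ forces $a$ small), and after Cauchy--Schwarz the residual $\frac{1}{4a}(\wh u^+)^{2(m-1)}(|\wh u_x|^2+|\wh q|^2)$ must be absorbed by the coercive term $m(2m-1)\mu_{0}(\wh u^+)^{2(m-1)}(|\wh u_x|^2+|\wh q|^2)$ supplied by Lemma \ref{c5:2-2-002}; this requires $\mu_{0}(2m-1)\ge\frac{1}{2a}$, so the paper applies Lemma \ref{c3:2-lem1} to $(\wh u^+)^{2m}$ with $m$ large rather than to $(\wh u)^2$ --- your $m=1$ energy cannot beat the $\frac{1}{4a}$ prefactor. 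Finally, your plan to treat the $\eps(|z^1|^2+|z^2|^2)$ part of $f_u$ as a remainder controlled by ``the freedom in $\eps$'' does not survive the Gronwall step, since $l_\eps$ blows up in $L^1(\cT)$ as $\eps\to0$; in the paper $\eps$ is fixed once and the $\eps|z|^2$ term is swallowed by the strictly negative quadratic coefficient created by the transform. So the skeleton is right, but the crux you flag --- ``the transform-and-close step'' --- is precisely where the proof lives, and it requires both the concave weight and the high power $2m$.
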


\begin{proof}
\emph{The first step}. We first prove this theorem under a more stringent
condition. Assume

(\bfH 4) There exist a positive constant $a$ and a function $b(\cdot)\in
L^1(\cT)$ such that for every $(t,x)\in\cT\times\cD$, $u\in[-M,M]$, $z\in
\R^{d+d_0}$, \begin{center}$
  f_u(t,x,u,z)
  + a |f_z(t,x,u,z)|^2
  \leq b(t),~~a.s..
$\end{center}

Denote $\wh{u} = u^1-u^2$ and $\wh{q} = q^1-q^2$. Set $\wh{u}^+ =
\max(0,\wh{u})$. Applying It\^o's formula (Lemma \ref{c3:2-lem1}), we have for
any $m\geq 2$ and $m\in\mathbb{N}$,
\begin{eqnarray}\label{c5:3-001}
  \begin{split}
    &\int_{\cD}[\wh{u}^+(t,x)]^{2m}\,dx \\
    &+ m(2m-1)\int_t^T\!\!\int_{\cD}
    (\wh{u}^+)^{2(m-1)}(2 a^{ij}\wh{u}_{x^i}\wh{u}_{x^j}
    + 2\sigma^{ik}\wh{u}_{x^i}\wh{q} + |\wh{q}|^2)(s,x)\,dxds\\
=&~ 2m\int_t^T\!\!\int_{\cD}(\wh{u}^+)^{2m-1}\wh{f}(s,x)\,dxds
    -2m\int_t^T\!\!\int_{\cD}(\wh{u}^+)^{2m-1}\wh{q}(s,x)\,dxdW_s.
  \end{split}
\end{eqnarray}
Here
\begin{eqnarray*}
  \begin{split}
    \wh{f}(s,x) = & f(s,x,u^1,u^1_x,q^1) - f(s,x,u^2,u^2_x,q^2)\\
    = & \bigg(\int_0^1 f_u(\Xi)d\lambda\bigg)\wh{u}
    + \bigg(\int_0^1 f_z(\Xi)d\lambda\bigg)(\wh{u}_x,\wh{q})',
  \end{split}
\end{eqnarray*}
where
$$(\Xi):= (s,x,\lambda u^1 + (1-\lambda)u^2,\lambda u^1_x + (1-\lambda)u^2_x,
\lambda q^1 + (1-\lambda)q^2).$$ Cauchy-Schwarz's inequality yields
\begin{eqnarray*}
  \begin{split}
    (\wh{u}^+)^{2m-1}\wh{f} \leq
    \bigg(\int_0^1 (f_u + a|f_z|^2)(\Xi)d\lambda\bigg)(\wh{u}^+)^{2m}
    + \frac{1}{4a}(\wh{u}^+)^{2(m-1)}(|\wh{u}_x|^2 + |\wh{q}|^2).
  \end{split}
\end{eqnarray*}
Noticing the inequality \eqref{c5:2-2-0021} and assumption (\bfH4), we can
deduce from \eqref{c5:3-001} that
\begin{eqnarray*}
  \begin{split}
    &\int_{\cD}[\wh{u}^+(t,x)]^{2m}\,dx
    + m\Big[\mu_{0}(2m-1)-\frac{1}{2a}\Big]\int_t^T\!\!\int_{\cD}
    (\wh{u}^+)^{2(m-1)}(|\wh{u}_x|^2 + |\wh{q}|^2)(s,x)\,dxds\\
\leq&~ 2m\int_t^T\!\!\int_{\cD}b(s)(\wh{u}^+)^{2m}(s,x)\,dxds
    -2m\int_t^T\!\!\int_{\cD}(\wh{u}^+)^{2m-1}\wh{q}(s,x)\,dxdW_s.
  \end{split}
\end{eqnarray*}
Taking expectation on both sides of the above ineuality, we have
\begin{eqnarray*}
  \begin{split}
    &\E\int_{\cD}[\wh{u}^+(t,x)]^{2m}\,dx
    + m\Big[\mu_{0}(2m-1)-\frac{1}{2a}\Big]\E\int_t^T\!\!\int_{\cD}
    (\wh{u}^+)^{2(m-1)}(|\wh{u}_x|^2 + |\wh{q}|^2)(s,x)\,dxds\\
\leq&~ 2m\int_t^T\!\!\int_{\cD}b(s)\E(\wh{u}^+)^{2m}(s,x)\,dxds.
  \end{split}
\end{eqnarray*}
Choosing $m$ large enough such that $\mu_{0}(2m-1)-\frac{1}{2a}\geq 0$,
together with Gronwall's inequality, we know that
$\E\int_{\cD}[\wt{u}^+(t,x)]^{2m}\,dx = 0$,  for all $t\in\cT$. So $u^1\leq
u^2$.

In the same way we can prove $u^2\leq u^1$. Hence $u^1 = u^2$.

\emph{The second step}. We will search for an appropriate change of variables
to convert BSPDE \eqref{c5:main}-\eqref{c5:con} satisfying (\bfH3) to another
BSPDE satisfying condition (\bfH4). Let
$$\wt{u} = \phi^{-1}(u),~~~~\wt{q} = q/w(u),$$
where $\phi$ is a smooth and increasing function to be determined with the
condition $\phi(0)=0$ and $w(u) = \phi'(\wt{u}) = \phi'(\phi^{-1}(u))$.

Suppose $(u,q)$ is a weak solution to BSPDE \eqref{c5:main}-\eqref{c5:con}.
Analogous to the proof of Lemma \ref{c5:2-lem2}, it is easy to verify that
$(\wt{u},\wt{q})$ is a weak solution to the equation
\begin{equation}\label{c5:3-003}
  \left\{\begin{array}{l}
    d\wt{u} = -\big[(a^{ij}\wt{u}_{x^j} + \sigma^{ik}\wt{q}^k)_{x^i}
    + F(t,x,\wt{u},\wt{u}_x,\wt{q})\big]\,dt
    + \wt{q}^k dW^k_t,\\
    \wt{u}|_{\cT\times\ptl\cD} = 0,~~~~\wt{u}(T) = \phi^{-1}(\vf),
  \end{array}\right.
\end{equation}
where
\begin{eqnarray}\label{c5:3-002}
  \begin{split}
    F(t,x,\wt{u},\wt{u}_x,\wt{q}) =~ & \frac{1}{\phi'(\wt{u})}
    \bigg[ f(t,x,\phi(\wt{u}),\phi'(\wt{u})\wt{u}_x,\phi'(\wt{u})\wt{q})\\
    & +\frac{1}{2}\phi''(\wt{u})( 2a^{ij}\wt{u}_{x^i}\wt{u}_{x^j}
    + 2\sigma^{ik}\wt{u}_{x^i}\wt{q}^k + |\wt{q}|^2) \bigg].
  \end{split}
\end{eqnarray}
Therefore it is sufficient to prove the equation \eqref{c5:3-003} has a unique
bounded weak solution.

We still denote $z = (u_x,q)$ and $\wt{z} = (\wt{u}_x,\wt{q})$. Obviously $z =
\phi'(\wt{u})\wt{z}$. We denote by $\la A(t,x)\wt{z},\wt{z} \ra$ the positive
definite quadratic form
$$2a^{ij}\wt{u}_{x^i}\wt{u}_{x^j}
    + 2\sigma^{ik}\wt{u}_{x^i}\wt{q}^k + |\wt{q}|^2,$$
where $A$ is a function with value in the space of symmetric positive definite
matrices. We can deduce from Lemma \ref{c5:2-2-002} that
$$\la A(t,x)\wt{z},\wt{z} \ra   = \wt{z} A(t,x)\wt{z}' \geq \mu_{0} |\wt{z}|^2.$$
So \eqref{c5:3-002} can be rewritten as
\begin{equation*}
    F(t,x,\wt{u},\wt{z}) = \frac{1}{\phi'(\wt{u})}
    \bigg[ f(t,x,\phi(\wt{u}),\phi'(\wt{u})\wt{z})
    +\frac{1}{2}\phi''(\wt{u})\la A(t,x)\wt{z},\wt{z} \ra  \bigg].
\end{equation*}
By simple computation,
\begin{eqnarray*}
  \begin{split}
    F_{\wt{u}}(t,x,\wt{u},\wt{z}) ~=~&
    - \frac{w'}{w} f(t,x,u,z) + f_u(t,x,u,z)
    + \frac{w''}{2w}\la A(t,x)z,z \ra \\
    & + \frac{w'}{w}z f_z(t,x,u,z)\\
    =~& \frac{w''}{2w}\la A z,z \ra
    + \frac{w'}{w}(z f_z-f) + f_u,\\
    F_{\wt{z}}(t,x,\wt{u},\wt{z}) = & f_z(t,x,u,z) + \frac{w'}{w}z.
  \end{split}
\end{eqnarray*}
If we can choose an appropriate $\phi$ such that $w>0$, $w'>0$ and $w''<0$,
from (\bfH3) we have
\begin{eqnarray*}
  \begin{split}
    & \Big(F_{\wt{u}} + a
    |F_{\wt{z}}|^2\Big)(t,x,\wt{u},\wt{z}) \\
    ~= ~& \frac{w''}{2w}\la A z,z \ra
    + \frac{w'}{w}(z f_z-f) + f_u + a \bigg|f_z + \frac{w'}{w}z\bigg|^2\\
    \leq~ & \frac{\mu_{0}}{2}\frac{w''}{w}|z|^2
    + \frac{w'}{w}\Big[l(t)+k(t)|z|+2\Lambda|z|^2\Big]
    + l_{\eps}(t) + \eps |z|^2\\
    &+ a \bigg[k(t)
    + \bigg(\Lambda+\frac{w'}{w}\bigg)|z|\bigg]^2\\
    \leq~ & |z|^2\bigg[\frac{\mu_{0}}{2}\frac{w''}{w}
    +2\Lambda\frac{w'}{w}+\eps+  a\bigg(\Lambda+\frac{w'}{w}\bigg)^2
    \bigg]\\
    &+ |z|\bigg[ k(t)\frac{w'}{w} + 2ak(t)\bigg(\Lambda+\frac{w'}{w}\bigg)\bigg]
    + l(t)\frac{w'}{w}+l_{\eps}(t) + a[k(t)]^2\\
    \leq~ & |z|^2\bigg[\frac{\mu_{0}}{2}\frac{w''}{w}
    +2\Lambda\frac{w'}{w}+\bigg(\frac{w'}{w}\bigg)^2 + \eps+
    2a\bigg(\Lambda+\frac{w'}{w}\bigg)^2
    \bigg]\\
    & + l(t)\frac{w'}{w}+l_{\eps}(t) + (1+2a)[k(t)]^2.
  \end{split}
\end{eqnarray*}
Once we find a function $\phi$ such that besides $w(u)>0$, $w'(u)>0$ and
$w''(u)<0$ on $[-M,M]$,
$$\frac{\mu_{0}}{2}\frac{w''}{w}
    +2\Lambda\frac{w'}{w}+\bigg(\frac{w'}{w}\bigg)^2 \leq -\delta<0,$$
we can choose $a$ and $\eps$ small enough to assure that $F(t,x,\wt{u},\wt{z})$
satisfies condition (\bfH4). Then we will obtain the desired result.

Set
$$ u = \phi(\wt{u}) = \frac{1}{\beta}\ln
\frac{(B e^{\beta M}-1)e^{\beta B \widetilde{u}}+1}{B e^{\beta M}},$$ where
$B>1$ and $\beta>0$ are constants to be determined. Obviously $\phi$ is a
strictly increasing function and $\phi(0)= 0$. By computation we know that for
any $u\in[-M,M]$,
\begin{eqnarray*}
  w(u) & =& B - e^{-\beta(u + M)}~>~0~,\\
  w'(u)& =& \beta e^{-\beta(u + M)} ~>~0~,\\
  w''(u)& =& - \beta^2 e^{-\beta(u + M)} ~<~0~.
\end{eqnarray*}
Furthermoer,
\begin{eqnarray*}
  \begin{split}
    &\frac{\mu_{0}}{2}\frac{w''}{w}
    +2\Lambda\frac{w'}{w}+\bigg(\frac{w'}{w}\bigg)^2\\
    =~&-\frac{\beta e^{-\beta(u + M)}}{(B - e^{-\beta(u + M)})^2}
    \bigg[\bigg(\frac{\mu_{0}}{2}\beta
    - 2\Lambda\bigg) B + \bigg(2\Lambda B - \frac{\mu_{0}+2}{2}\beta\bigg)
    e^{-\beta(u+M)}\bigg].
  \end{split}
\end{eqnarray*}
We can choose appropriate $\beta$ and $B$ to assure the above equality
negative. The proof is complete.
\end{proof}

\begin{rem}\label{rem4.1}
In the case $\mathcal{D}=\mathbb{R}^d$, we claim that the conclusions
concerning the existence and uniqueness of solutions to BSPDEs in bounded
domains are still valid. Indeed, setting $g^i=f^i-u_{x^i}$, we know that $
(u,q)$ is a weak solution to the BSPDE
\begin{equation*}
\left\{
\begin{split}
&du~=~-\big(\Delta u+f^0+\sum_{i=1}^{d}g^i_{x^i}\big)dt+q^kdW^k_t, \\
&u(T,x)~=~\varphi(x).
\end{split}
\right.
\end{equation*}
Approximating the coefficients $f^0$, $g^i$, $i=1,\cdot,d$, and $\varphi$ by
sequences of functions in the space $C_0^\infty(\mathbb{R}^d)$, and applying
Corollary 3.4 in \cite{DuTang11}, we can prove that the It\^{o}'s formula in
Lemma \ref{c3:2-lem1} is still valid. Once the It\^{o}'s formula is
established, we can obtain the claim since in addition to the assumptions on
the boundedness of coefficients, we require their corresponding integrability
in appropriate spaces to avoid the item $\emph{meas}(\mathcal{D })$ appearing
in the estimates.
\end{rem}

\section{Application to non-Markovian stochastic control problems}

Analogous to \cite{Peng92a}, in this section we give an example, a stochastic
control problem with a recursive cost functional formulated by a quadratic
BSDE, to illustrate that the corresponding value function will formally satisfy
a kind of stochastic Hamilton-Jacobi-Bellman equations with quadratic growth.

The stochastic HJB equation that we concern has the form
\begin{equation}  \label{e1}
\left\{
\begin{split}
-du(t,x)=&\frac{1}{2}tr\big\{[
\sigma(t,x)\sigma^*(t,x)+\pi(t,x)\pi^*(t,x)]D^2_xu(t,x)\big\}dt \\
&+\inf_{v\in V}\big\{f\big(t,x,u(t,x),(\langle
D_xu(t,x),\sigma(t,x)\rangle+q(t,x), \\
&~\ \ \ \ \ \ \ \ \ \langle D_xu(t,x),\pi(t,x)\rangle),v\big)+\langle
b(t,x,v),D_xu(t,x)\rangle\big\}dt \\
&+\langle D_xq(t,x),\sigma(t,x)\rangle dt+q(t,x)dW_t, \\
u(T,x)=&~\phi(x).
\end{split}
\right.
\end{equation}
In what follows, we show its formal derivation from the context of a
non-Markovian stochastic control problem.

Suppose $(B_t)_{t\in\mathcal{T}}$ is another standard Wiener process which is
independent of $(W_t)_{t\in\mathcal{T}}$. Without loss of generality, we only
consider the case where $B$ and $W$ are both one-dimensional. Denote by
$\{\mathscr{F}^*_t\}_{t\in\mathcal{T}}$ is the natural filtration generated by
both $W$ and $B$ and augmented by all the $\mathbb{P}$-null sets in $
\mathscr{F}$. We also denote by $\mathscr{P}^*$ the predictable $\sigma$
-algebra associated with $\{\mathscr{F}^*_t\}_{t\in \mathcal{T}}$.

We introduce the admissible control set
\begin{equation*}
\begin{split}
\mathcal{V}_{t,T}:=\big\{v(\cdot)| &v(\cdot)\ \text{is a V-valued and}\
\mathscr{P}^*\ \text{measurable process defined on}\ [t,T]\ \text{and} \\
&\mathbb{E}\int_t^Tv^2(s)ds<\infty\big\},
\end{split}
\end{equation*}
where $V$ is a compact set of $\mathbb{R}^m$.

We consider the controlled system parameterized by the initial data $(t,x)\in
\mathcal{T}\times\mathbb{R}^n$:
\begin{equation}  \label{sde}
\left\{
\begin{split}
dX^{t,x;v}_s&~=~b(s,X^{t,x;v}_s,v_s)ds+\sigma(s,X^{t,x;v}_s)dW_s+
\pi(s,X^{t,x;v}_s)dB_s, \\
X^{t,x;v}_t&~=~x,
\end{split}
\right.
\end{equation}
where the coefficients
\begin{equation*}
b:\Omega\times\mathcal{T}\times\mathbb{R}^n\times V\rightarrow \mathbb{R} ^n,\
\ \sigma:\Omega\times\mathcal{T}\times\mathbb{R}^n\rightarrow \mathbb{R} ^n,\
\pi:\Omega\times\mathcal{T}\times\mathbb{R}^n\rightarrow \mathbb{R}^n,
\end{equation*}
satisfy

(A1) $b$, $\sigma$ and $\pi$ are bounded functions and for every $(x,v)\in
\mathbb{R}^n\times V$, $b(\cdot,x,v)$, $\sigma(\cdot,x)$ and $\pi(\cdot,x)$ are
$\mathscr{P}$ measurable processes.

(A2) There exists $L>0$ such that
\begin{equation*}
|b(t,x,v)-b(t,x^{\prime },v^{\prime })|+|\sigma(t,x)-\sigma(t,x^{\prime
})|+|\pi(t,x)-\pi(t,x^{\prime })|\leq L(|x-x^{\prime }|+|v-v^{\prime }|).
\end{equation*}

For a given admissible control $v(\cdot)\in\mathcal{V}_{t,T}$, we consider the
following BSDE
\begin{equation}  \label{bsde}
\left\{
\begin{split}
dY^{t,x;v}_s&~=-f(s,X^{t,x;v}_s,Y^{t,x;v}_s,Z^{t,x;v}_s,v_s)ds+\tilde{Z}
^{t,x;v}_sdW_s+\bar{Z}^{t,x;v}_sdB_s, \\
Y^{t,x;v}_T&~=\phi(X^{t,x;v}_T),
\end{split}
\right.
\end{equation}
where we denote $Z=(\tilde{Z},\bar{Z})$. We assume that

(A3) $f:\Omega\times\mathcal{T}\times\mathbb{R}^n\times\mathbb{R}\times
\mathbb{R}^2\times V\rightarrow \mathbb{R}$ satisfies condition (\textbf{H}3).

(A4) The terminal value $\phi:\Omega\times\mathbb{R}^n\rightarrow \mathbb{R}$
is $\mathscr{F}_T\times\mathscr{B}(\mathbb{R}^n)$ measurable and $\phi\in
L^2(\Omega\times\mathbb{R}^n)\cap L^{\infty}(\Omega\times\mathbb{R}^n)$.
\newline
It is well known that under conditions (A1), (A2), (A3) and (A4), SDE
\eqref{sde} and BSDE \eqref{bsde} have unique solutions, respectively.

For a given admissible control $v(\cdot)\in\mathcal{V}_{t,T}$, we introduce the
associated cost functional
\begin{equation*}
J(t,x;v(\cdot))=\mathbb{E}^{\mathscr{F}_t}Y^{t,x;v}_t.
\end{equation*}
Thus the value function of the stochastic optimal control problem is
\begin{equation*}
u(t,x):=\mathop{ess~\inf}_{v(\cdot)\in\mathcal{V}_{t,T}}J(t,x;v(\cdot)).
\end{equation*}
Since the related coefficients $b$, $\sigma$, $f$ and $\phi$ are random
functions, the value function $u$ is a random field. We recall the generalized
dynamic programming principle for the above control problem with recursive cost
functional in \cite{YaPeFaWu97}. For given initial data $
(t,x)\in\mathcal{T}\times\mathbb{R}^n$, a positive number $\delta\leq T-t$, and
a random variable $\eta\in L^2(\Omega,\mathscr{F}^*_{t+\delta},\mathbb{P}
;\mathbb{R})$, we denote a backward semigroup by
\begin{equation*}
G^{t,x;v}_{t,t+\delta}[\eta]:=Y_t.
\end{equation*}
Here $(Y_s,Z_s)_{s\in[t,t+\delta]}$ is the solution to the following BSDE
\begin{equation*}
\left\{
\begin{split}
dY_s&~=~-f(s,X^{t,x;v}_s,Y_s,Z_s,v_s)ds+\tilde{Z}_sdW_s+\bar{Z}_sdB_s,\ s\in[
t,t+\delta], \\
Y_{t+\delta}&~=~\eta,
\end{split}
\right.
\end{equation*}
where $X^{t,x;v}_\cdot$ is the solution to SDE \eqref{sde}.

Then we have the generalized dynamic programming principle (Theorem 6.6 in
Section 2 in \cite{YaPeFaWu97})
\begin{equation*}
u(t,x)=\mathop{ess~\inf}_{v(\cdot)\in\mathcal{V}_{t,T}}\mathbb{E}^{
\mathscr{F}_t}G^{t,x;v}_{t,t+\delta}[u(t+\delta,X^{t,x;v}_{t+\delta})].
\end{equation*}
Suppose $u$ is smooth with respect to $(t,x)$, we can use the It\^{o}
-Wentzell's formula (see, e.g. \cite{Peng92a}) and a similar procedure
in \cite{Peng92a} to obtain that the value function $u$ formally satisfies
BSPDE \eqref{e1}. According to our theoretical results, $u$ is a bounded random
field.

\section{Appendix}

\subsection{Proof of Proposition \protect\ref{c5:2-prp1}}

Suppose ~$\xi$~satisfies the following ODE
\begin{equation*}
\xi(t) = \|\varphi^+\|_{L^\infty(\Omega\times\mathcal{D})} + \int_t^T
(\lambda_1 \xi(s) + \|\lambda_0\|_{\mathbb{L}^{\infty}}) ds.
\end{equation*}
Then for any $t\in\mathcal{T}$, we have
\begin{equation*}
\xi(t) = \frac{\|\lambda_0\|_{\mathbb{L}^{\infty}}}{\lambda_1}
(e^{\lambda_1(T-t)}-1) +
e^{\lambda_1(T-t)}\|\varphi^+\|_{L^\infty(\Omega\times\mathcal{D})}.
\end{equation*}
We will prove ~$u(t,x) \leq \xi(t)$~a.e.~$(\omega,x)$.

Denote~$M_1 := \|u\|_{\mathbb{L}^{\infty}} +
\|\varphi\|_{L^{\infty}(\Omega\times\mathcal{D})}$.~~ Define a function $
\Psi_1$ on~$[-M_1,M_1]$ as follows
\begin{equation*}
\Psi_1(v) = \left\{
\begin{array}{ll}
e^{2\lambda v} - (1+2\lambda v + 2\lambda^2 v^2),~~ & \text{when}~v\in
[0,M_1], \\
0, & \text{when}~v\in [-M_1,0].
\end{array}
\right.
\end{equation*}
By simple computation, we know $\Psi_1$ has the properties: $\forall v\in
[-M_1,M_1]$,
\begin{gather*}
\Psi_1(v) \geq 0,~~\Psi_1^{\prime }(v)\geq 0, \\
\Psi_1(v)=0\Leftrightarrow v\leq0, \\
0\leq v\Psi_1^{\prime }(v)\leq 2(M_1+3)\lambda \Psi_1(v), \\
\lambda \Psi_1^{\prime }- \frac{1}{2}\Psi_1^{\prime \prime }\leq 0.
\end{gather*}
By Lemma \ref{c3:2-lem1}, we have
\begin{eqnarray*}
\begin{split}
&\int_{\mathcal{D}}\Psi_1(u(t,x)-\xi(t))dx - \int_{\mathcal{D}
}\Psi_1(\varphi(x)-\xi(T))dx \\
=&~\int_t^T\!\!\int_{\mathcal{D}}\Psi_1^{\prime }(u(s,x)-\xi(s))\Big\{
(a^{ij}u_{x^j} + \sigma^{ik}q^k)_{x^i}(s,x) + f(s,x,u(s,x),u_x(s,x),q(s,x))
\\
&~- (\lambda_1 \xi(s) + \|\lambda_0\|_{\mathbb{L}^{\infty}})\Big\}dxds -
\frac{1}{2}\int_t^T\!\!\int_{\mathcal{D}} \Psi_1^{\prime \prime 2 }dxds \\
&~-\int_t^T\!\!\int_{\mathcal{D}}\Psi_1^{\prime k}(s,x)dxdW^k_s.
\end{split}
\end{eqnarray*}
According to the integration by parts and Lemma \ref{c5:2-2-002}, we have
\begin{eqnarray*}
\begin{split}
&\int_t^T\!\!\int_{\mathcal{D}}\Big\{\Psi_1^{\prime ij}u_{x^j} +
\sigma^{ik}q^k)_{x^i}(s,x) -\frac{1}{2}\Psi_1^{\prime \prime 2}\Big\} dxds \\
=&~-\int_t^T\!\!\int_{\mathcal{D}}\Psi_1^{\prime \prime }(u(s,x)-\xi(s))
\Big(a^{ij}u_{x^i}u_{x^j} + \sigma^{ik}u_{x^i}q^k + \frac{1}{2} |q|^2\Big)
(s,x)dxds \\
\leq&~-\frac{\mu_{0}}{2} \int_t^T\!\!\int_{\mathcal{D}}\Psi_1^{\prime \prime
}(u(s,x)-\xi(s)) (|u_x|^2 + |q|^2)(s,x)dxds.
\end{split}
\end{eqnarray*}
On the other hand, set~$\widetilde{\lambda}_1 = \lambda_1 \mathop{sgn} (u)$,
then
\begin{equation*}
f(s,x,u,u_x,q) \leq \lambda_0(s,x) + \widetilde{\lambda}_1 u + \lambda \mu_{0}
(|u_x|^2 + |q|^2).
\end{equation*}
Noticing that~$(\widetilde{\lambda}_1-\lambda_1) \xi(s)\leq 0$, we have
\begin{eqnarray*}
\begin{split}
&f(s,x,u(s,x),u_x(s,x),q(s,x)) - (\lambda_1 \xi(s) + \|\lambda_0\|_{\mathbb{L
}^{\infty}}) \\
\leq&~ \lambda_0(s,x) + \widetilde{\lambda}_1 u(s,x) + \lambda \mu_{0} (|u_x|^2
+ |q|^2)(s,x) - (\lambda_1 \xi(s) + \|\lambda_0\|_{\mathbb{L}
^{\infty}}) \\
\leq&~ \widetilde{\lambda}_1 (u(s,x)-\xi(s)) + (\widetilde{\lambda}
_1-\lambda_1) \xi(s) + \lambda \mu_{0} (|u_x|^2 + |q|^2)(s,x) \\
\leq&~ \widetilde{\lambda}_1 (u(s,x)-\xi(s)) + \lambda \mu_{0} (|u_x|^2 +
|q|^2)(s,x).
\end{split}
\end{eqnarray*}
Thus,
\begin{eqnarray}  \label{c5:2-2-003}
\begin{split}
&\int_{\mathcal{D}}\Psi_1(u(t,x)-\xi(t))dx - \int_{\mathcal{D}
}\Psi_1(\varphi(x)-\xi(T))dx \\
\leq&~\int_t^T\!\!\int_{\mathcal{D}}\widetilde{\lambda}_1 \Psi_1^{\prime
}(u(s,x)-\xi(s))(u(s,x)-\xi(s))dxds \\
&~+ \int_t^T\!\!\int_{\mathcal{D}}\mu_{0} \big(\lambda \Psi_1^{\prime }-
\frac{1}{2}\Psi_1^{\prime \prime }\big) (u(s,x)-\xi(s))(|u_x|^2 +
|q|^2)(s,x)dxds \\
&~-\int_t^T\!\!\int_{\mathcal{D}}\Psi_1^{\prime k}(s,x)dxdW^k_s.
\end{split}
\end{eqnarray}
In view of the properties that~$\Psi_1$~ possesses, we have
\begin{eqnarray*}
\begin{split}
0~\leq~&\int_{\mathcal{D}}\Psi_1(u(t,x)-\xi(t))dx \\
~\leq~&\int_t^T\!\!\int_{\mathcal{D}}2(M_1+3)\lambda \lambda_1
\Psi_1(u(s,x)-\xi(s))dxds \\
&~-\int_t^T\!\!\int_{\mathcal{D}}\Psi_1^{\prime k}(s,x)dxdW^k_s,~~~~a.s..
\end{split}
\end{eqnarray*}
Taking expectation on both sides of the above inequality, we get
\begin{align*}
0&\leq \mathbb{E}\int_{\mathcal{D}}\Psi_1(u(t,x)-\xi(t))dx \\
&\leq 2(M_1+3)\lambda \lambda_1\int_t^T\mathbb{E} \bigg[\int_{\mathcal{D}}
\Psi_1(u(s,x)-\xi(s))dx\bigg]ds.
\end{align*}
Gronwall's inequality yields
\begin{equation*}
\mathbb{E}\int_{\mathcal{D}}\Psi_1(u(t,x)-\xi(t))dx = 0,~~~~\forall~t\in
\mathcal{T}.
\end{equation*}
Due to $\Psi_1(v)\geq 0$, it holds that for every $t\in\mathcal{T}$,
\begin{equation*}
\Psi_1(u(t,x)-\xi(t)) = 0,~~~~ a.e.~(\omega,x).
\end{equation*}
The fact that $\Psi_1(v)=0\Leftrightarrow v\leq 0$ implies that for every $
t\in\mathcal{T}$,
\begin{equation*}
u(t,x)\leq \xi(t),~~~~a.e.~(\omega,x).
\end{equation*}
In the same way we can also prove that for every $t\in\mathcal{T}$,
\begin{equation*}
u(t,x) \geq - \frac{\|\lambda_0\|_{\mathbb{L}^{\infty}}}{\lambda_1}
(e^{\lambda_1(T-t)}-1) -
e^{\lambda_1(T-t)}\|\varphi^-\|_{L^\infty(\Omega\times\mathcal{D})},~~
a.e.~(\omega,x).
\end{equation*}
So we obtain~\eqref{c5:2-2-004}.

Next we prove \eqref{c5:2-2-005}. Denote $M_2 = \|u\|_{\mathbb{L}^{\infty}}$
.Define a function $\Psi_2$ on $[-M_2,M_2]$ as
\begin{equation*}
\Psi_2(v) = \left\{
\begin{array}{ll}
\frac{1}{2}\lambda^{-2}[ e^{2\lambda v} - (1 + 2\lambda v) ],~~ & \text{when}
~v\in [0,M_2], \\
\Psi_2(-v), & \text{when}~v\in [-M_2,0].
\end{array}
\right.
\end{equation*}
It is easy to verify that $\Psi_2$ has the following properties: for every~$
v\in [-M_2,M_2]$,
\begin{gather*}
\Psi_2(v)\geq 0,~~\Psi_2^{\prime }(0) = 0,~~ |\Psi_2^{\prime }(v)| \leq
\frac{e^{2\lambda M_2} -1}{\lambda}, \\
\frac{1}{2}\Psi_2^{\prime \prime }(v) - \lambda |\Psi_2^{\prime }(v)| = 1.
\end{gather*}
Applying It\^{o}'s formula to compute $\int_{\mathcal{D}}\Psi_2(u(t,x))dx$, we
have
\begin{equation}  \label{c1}
\begin{split}
&\int_{\mathcal{D}}\Psi_2(u(t,x))dx - \int_{\mathcal{D}}\Psi_2(\varphi(x))dx
\\
\leq~&\int_t^T\!\!\int_{\mathcal{D}} |\Psi_2^{\prime
}(u(s,x))|(\lambda_0(s,x) + \lambda_1|u(s,x)|)dxds \\
&+ \int_t^T\!\!\int_{\mathcal{D}}\mu_{0} \big(\lambda |\Psi_2^{\prime }| -
\frac{1}{2}\Psi_2^{\prime \prime }\big) (u(s,x))(|u_x|^2 + |q|^2)(s,x)dxds \\
&-\int_t^T\!\!\int_{\mathcal{D}}\Psi_2^{\prime k}(s,x)dxdW^k_s.
\end{split}
\end{equation}
Since $\Psi_2$ and $\Psi_2^{\prime }$, defined on the finite duration $
[-M_2,M_2]$, are of the same order as $v^2$ and $v$ near the zero respectively,
there exist positive constants $k_1$, $k_2$, $k_3$ and $k_4$ depending only on
$\lambda$ and $M_2$, such that
\begin{equation*}
k_1v^2\leq\Psi_2(v)\leq k_2v^2,\ \ k_3|v|\leq|\Psi_2^{\prime }(v)|\leq k_4|v|.
\end{equation*}
Thus,
\begin{equation*}
\begin{split}
&\int_t^T\!\!\int_{\mathcal{D}}|\Psi_2^{\prime }(u(s,x))|\lambda_0(s,x)dxds
\\
\leq~&\frac{k_4^2}{2}\int_t^T\!\!\int_{\mathcal{D}}|u(s,x)|^2dxds +\frac{1}{2
}\int_t^T\!\!\int_{\mathcal{D}}\lambda^2_0(s,x)dxds.
\end{split}
\end{equation*}
Taking expectation on both sides of \eqref{c1}, we obtain
\begin{equation}  \label{c2}
\begin{split}
&\mu_{0} \mathbb{E}\int_t^T\!\!\int_{\mathcal{D}}(|u_x|^2 +
|q|^2)(s,x)dxds+k_1\mathbb{E}\int_{\mathcal{D}}|u(t,x)|^2dx \\
\leq~&k_2\mathbb{E}\int_{\mathcal{D}}|\varphi(x)|^2dx+\frac{1}{2}\mathbb{E}
\int_0^T\!\!\int_{\mathcal{D}}\lambda^2_0(s,x)dxds \\
&+(\frac{k_4^2}{2}+k_4\lambda_1)\int_t^T\!\mathbb{E}\!\int_{\mathcal{D}
}|u(s,x)|^2dxds.
\end{split}
\end{equation}
Gronwall's inequality yields
\begin{equation*}
\sup_{t\in\mathcal{T}}\mathbb{E}\int_{\mathcal{D}}|u(t,x)|^2dx\leq\big(\frac{
k_2}{k_1} \|\varphi(x)\|_{L^2(\Omega\times\mathcal{D})}+\frac{1}{2k_1}
\|\lambda_0\|_{\mathbb{L}^{2}}\big) e^{\frac{k_4^2+2k_4\lambda_1}{2k_1}T}.
\end{equation*}
Again from \eqref{c2} we deduce that
\begin{equation*}
\|u_x\|_{\mathbb{H}^0(\mathcal{D})}^2 + \|q\|_{\mathbb{H}^0(\mathcal{D})}^2
\leq C_1,
\end{equation*}
where $C_1$ depends on $\|\varphi(x)\|_{L^2(\Omega\times\mathcal{D})}$, $
\|\lambda_0\|_{\mathbb{L}^{2}}$, $\mu_{0}$, $\lambda$, $\lambda_1$ and $T$. The
proof of Proposition \ref{c5:2-prp1} is complete.

\subsection{Proof of Proposition \protect\ref{c5:prop.01}}

Since the sequence $(u^n)_n$ is monotone and bounded, there exists its limit
function which we denote by $u$. Obviously $u\in\mathbb{L}^\infty$. By the
monotone convergence theorem, $\lim_{n\rightarrow \infty}\|u-u^n\|_{\mathbb{H
}^0(\mathcal{D})}^2 = 0$.

We know from \eqref{c5:2-2-005} in Proposition \ref{c5:2-prp1} that for any $
n\in \mathbb{N}$,

\begin{center}
$ \|u^n\|_{\mathbb{H}^1(\mathcal{D})}^2 + \|q^n\|_{\mathbb{H}^0(\mathcal{D}
)}^2  \leq C_1. $
\end{center}

So we can extract a subsequence $\{n^{\prime }\}$ of the sequence $\{n\}$ and
find functions $v\in \mathbb{H}^1(\mathcal{D};\mathbb{R}^{d})$~and~$q\in
\mathbb{H}^0(\mathcal{D};\mathbb{R}^{d_0})$ such that

\begin{center}
$u^{n^{\prime }}\rightarrow v$ weakly in $\mathbb{H}^1(\mathcal{D})$,\\[0pt]
$q^{n^{\prime }}\rightarrow q$ weakly in $\mathbb{H}^0(\mathcal{D})$.
\end{center}

The uniqueness of limit implies $v=u$.

Next we finish the proof by three steps.\medskip

\emph{Step 1}. Due to the existence of the nonhomogeneous term $f$, the weak
convergence of $(u^{n^{\prime }},q^{n^{\prime }})_{n^{\prime }}$ can not assure
that the limit $(u,q)$ is a weak solution to BSPDE \eqref{c5:main}-
\eqref{c5:con}. Now we prove that the sequences $(u^{n}_x)_{n}$ and $
(q^{n})_{n}$ converge strongly in $\mathbb{H}^0(\mathcal{D})$.

We first deduce from condition (b) that for any $n,m\in\mathbb{N}$,
\begin{eqnarray}  \label{c5:2-2-006}
\begin{split}
&|f^n(t,x,u^n,u^n_x,q^n)-f^m(t,x,u^m,u^m_x,q^m)| \\
\leq&~2 \lambda_2(t,x) +5\lambda\mu_{0}(|u^n_x-u^m_x|^2 + |u^n_x-u_x|^2 +
|u_x|^2 \\
&~+|q^n-q^m|^2 + |q^n-q|^2 + |q|^2).
\end{split}
\end{eqnarray}
Define a function $\Psi_3$ on $[0,2M]$ as follows

\begin{center}
$\Psi_3(v) = \displaystyle\frac{1}{200\lambda^2} (e^{20\lambda v} - 20\lambda v
-1), $
\end{center}

It is easy to verify that $\Psi_3$ is an increasing function and that

\begin{center}
$\Psi_3^{\prime }(0)= \Psi_3(0)=0,~~~~ \displaystyle\frac{1}{2} \Psi_3^{\prime
\prime }-10\lambda\Psi_3^{\prime }\equiv 1.$
\end{center}

For notational simplicity, we denote~$u^{\infty} = u,~q^{\infty} = q$,

\begin{center}
$\delta_u^{n,m} = u^n-u^m,~~~~\delta_q^{n,m} = q^n-q^m.$
\end{center}

By Lemma \ref{c3:2-lem1} and the integration by parts, we have
\begin{align*}
&\int_{\mathcal{D}}\Psi_3(\delta_u^{n,m}(0,x))dx - \int_{\mathcal{D}
}\Psi_3(\delta_u^{n,m}(T,x))dx \\
=& \int_0^T\!\!\int_{\mathcal{D}}\Psi_3^{\prime }(\delta_u^{n,m}(t,x))
[f^n(t,x,u^n,u^n_x,q^n)-f^m(t,x,u^m,u^m_x,q^m)]dxdt \\
&~ -\int_0^T\!\!\int_{\mathcal{D}}\Psi_3^{\prime \prime }(\delta_u^{n,m}(t,x))
[a^{ij}(\delta_u^{n,m})_{x^i}(\delta_u^{n,m})_{x^j}
+\sigma^{ik}(\delta_u^{n,m})_{x^i}(\delta_q^{n,m})^k](t,x)dxdt \\
&~ -\frac{1}{2}\int_0^T\!\!\int_{\mathcal{D}}\Psi_3^{\prime \prime
}(\delta_u^{n,m}(t,x)) |\delta_q^{n,m}(t,x)|^2 dxdt \\
&~ -\int_0^T\!\!\int_{\mathcal{D}}\Psi_3^{\prime }(\delta_u^{n,m}(t,x))
\delta_q^{n,m}(t,x)dxdW_t.
\end{align*}
Noticing $\Psi_3^{\prime }\geq 0$, Lemma~\ref{c5:2-2-002} and
\eqref{c5:2-2-006}, we have
\begin{align*}
&\int_{\mathcal{D}}\Psi_3(\delta_u^{n,m}(0,x))dx - \int_{\mathcal{D}
}\Psi_3(\delta_u^{n,m}(T,x))dx \\
\leq& \int_0^T\!\!\int_{\mathcal{D}}\Psi_3^{\prime }(\delta_u^{n,m})
\times[2\lambda_2 +5\lambda\mu_{0}(|(\delta_u^{n,m})_{x}|^2 +
|\delta_q^{n,m}|^2 \\
&\qquad\qquad + |(\delta_u^{n,\infty})_x|^2 +
|\delta_q^{n,\infty}|^2+|u_x|^2 + |q|^2)](t,x)dxdt \\
&~ -\frac{\mu_{0}}{2}\int_0^T\!\!\int_{\mathcal{D}}\Psi_3^{\prime \prime
}(\delta_u^{n,m}) [|(\delta_u^{n,m})_{x}|^2 + |\delta_q^{n,m}|^2](t,x) dxdt
\\
&~-\int_0^T\!\!\int_{\mathcal{D}}\Psi_3^{\prime }(\delta_u^{n,m}(t,x))
\delta_q^{n,m}(t,x)dxdW_t.
\end{align*}
Taking expectation on both side of the above inequality, we get
\begin{align*}
&\mathbb{E}\int_{\mathcal{D}}\Psi_3(\delta_u^{n,m}(0,x))dx-\mu_{0}\mathbb{E}
\int_0^T\!\!\int_{\mathcal{D}}5\lambda\Psi_3^{\prime }(\delta_u^{n,m})
[|(\delta_u^{n,\infty})_x|^2+ |\delta_q^{n,\infty}|^2](t,x)dxdt \\
&+ \mu_{0} \mathbb{E} \int_0^T\!\!\int_{\mathcal{D}}\Big[\frac{1}{2}
\Psi_3^{\prime \prime }-5\lambda\Psi_3^{\prime }\Big](\delta_u^{n,m})
[|(\delta_u^{n,m})_{x}|^2 + |\delta_q^{n,m}|^2](t,x) dxdt \\
\leq&~ \mathbb{E}\int_{\mathcal{D}}\Psi_3(\delta_u^{n,m}(T,x))dx + \mathbb{E}
\int_0^T\!\!\int_{\mathcal{D}}\Psi_3^{\prime }(\delta_u^{n,m}) [2\lambda_2 +
5\lambda\mu_{0}(|u_x|^2 + |q|^2)](t,x)dxdt.
\end{align*}
Letting $m$ tend to infinity along the subsequence $\{n^{\prime }\}$, together
with the fact that $u^n$ converges pointwise to $u$, we can deduce from the
dominated convergence theorem that
\begin{align*}
&\mathbb{E}\int_{\mathcal{D}}\Psi_3(\delta_u^{n,\infty}(0,x))dx-\mu_{0}
\mathbb{E}\int_0^T\!\!\int_{\mathcal{D}}5\lambda\Psi_3^{\prime
}(\delta_u^{n,\infty}) [|(\delta_u^{n,\infty})_x|^2+
|\delta_q^{n,\infty}|^2](t,x)dxdt \\
&+ \varliminf_{\{n^{\prime }\}\ni m\rightarrow\infty} \mu_{0} \mathbb{E}
\int_0^T\!\!\int_{\mathcal{D}}\Big[\frac{1}{2}\Psi_3^{\prime \prime
}-5\lambda\Psi_3^{\prime }\Big](\delta_u^{n,\infty})
[|(\delta_u^{n,m})_{x}|^2 + |\delta_q^{n,m}|^2](t,x) dxdt \\
\leq &~ \mathbb{E}\int_{\mathcal{D}}\Psi_3(\delta_u^{n,\infty}(T,x))dx +
\mathbb{E}\int_0^T\!\!\int_{\mathcal{D}}\Psi_3^{\prime }(\delta_u^{n,\infty})
[2\lambda_2 + 5\lambda\mu_{0}(|u_x|^2 + |q|^2)](t,x)dxdt.
\end{align*}
In virtue of the weak convergence of the two sequences $(u^{n^{\prime
}}_x)_{n^{\prime }}$~and~$(q^{n^{\prime }})_{n^{\prime }}$, we have
\begin{eqnarray*}
&&\mathbb{E} \int_0^T\!\!\int_{\mathcal{D}}\Psi_3^{\prime
}(\delta_u^{n,\infty}) [|(\delta_u^{n,\infty})_{x}|^2 +
|\delta_q^{n,\infty}|^2](t,x) dxdt \\
&&\leq \varliminf_{\{n^{\prime }\}\ni m\rightarrow\infty} \mathbb{E}
\int_0^T\!\!\int_{\mathcal{D}}\Psi_3^{\prime }(\delta_u^{n,\infty})
[|(\delta_u^{n,m})_{x}|^2 + |\delta_q^{n,m}|^2](t,x) dxdt.
\end{eqnarray*}
Since $\frac{1}{2}\Psi_3^{\prime \prime }-10\lambda\Psi_3^{\prime }\equiv 1$ ,
we have
\begin{align*}
&\mathbb{E}\int_{\mathcal{D}}\Psi_3(\delta_u^{n,\infty}(0,x))dx +
\varliminf_{\{n^{\prime }\}\ni m\rightarrow\infty} \mu_{0} \mathbb{E}
\int_0^T\!\!\int_{\mathcal{D}} [|(\delta_u^{n,m})_{x}|^2 +
|\delta_q^{n,m}|^2](t,x) dxdt \\
& ~~\leq \mathbb{E}\int_{\mathcal{D}}\Psi_3(\delta_u^{n,\infty}(T,x))dx +
\mathbb{E}\int_0^T\!\!\int_{\mathcal{D}}\Psi_3^{\prime }(\delta_u^{n,\infty})
[2\lambda_2 + 5\lambda\mu_{0}(|u_x|^2 + |q|^2)](t,x)dxdt.
\end{align*}
The resonance theorem yields that
\begin{eqnarray*}
\begin{split}
&\mathbb{E}\int_{\mathcal{D}}\Psi_3(\delta_u^{n,\infty}(0,x))dx + \mu_{0}
\mathbb{E} \int_0^T\!\!\int_{\mathcal{D}} [|(\delta_u^{n,\infty})_{x}|^2 +
|\delta_q^{n,\infty}|^2](t,x) dxdt \\
\leq&~ \mathbb{E}\int_{\mathcal{D}}\Psi_3(\delta_u^{n,\infty}(T,x))dx +
\mathbb{E}\int_0^T\!\!\int_{\mathcal{D}}\Psi_3^{\prime }(\delta_u^{n,\infty})
[2\lambda_2 + 5\lambda\mu_{0}(|u_x|^2 + |q|^2)](t,x)dxdt.
\end{split}
\end{eqnarray*}
Noticing again $u^n$ converges pointwise to $u$, $\delta_u^{n,\infty}$
converges to $0$. Therefore, the dominated converge theorem yields
\begin{equation*}
\varlimsup_{n\rightarrow\infty}\mathbb{E} \int_0^T\!\!\int_{\mathcal{D}}
[|(\delta_u^{n,\infty})_{x}|^2 + |\delta_q^{n,\infty}|^2](t,x) dxdt = 0,
\end{equation*}
which implies that $(u^n_x)_n$~and~$(q^n)_n$ converge strongly to ~$u_x$~and~
$q$ in $\mathbb{H}^0(\mathcal{D})$, respectively.\medskip

\emph{Step 2}. We prove that $(u,q)$ is a weak solution to BSPDE
\eqref{c5:main}-\eqref{c5:con}. To this end, we need the following lemma, the
proof of which can be obtained by the same argument as that used in Lemma 2.5
in \cite{Koby00}.

\begin{lem}
Suppose a sequence $(v^n)_n$ converges to $v$ strongly in $\mathbb{H}^0(
\mathcal{D})$. Then there exists a subsequence $(v^{n_k})_k$ such that $
(v^{n_k})_k$ converges to $v$ a.e. and $\widetilde{v}  :=\sup_{k}|v^{n_k}| \in
\mathbb{H}^0(\mathcal{D})$.
\end{lem}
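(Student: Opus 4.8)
The plan is to recognize this as the standard ``dominated subsequence'' result that underlies the completeness of $L^p$ spaces, applied to $\bH^0(\cD) = L^2_{\sP}(\Omega\times\cT; L^2(\cD))$. This is nothing but the space of square-integrable predictable functions on the product measure space $\Omega\times\cT\times\cD$, whose norm $\|\cdot\|_{\bH^0(\cD)}$ is exactly the $L^2$-norm there; hence the classical argument from the Riesz--Fischer theorem applies verbatim.

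First I would use the convergence $v^n\to v$ in $\bH^0(\cD)$ to extract a rapidly converging subsequence $(v^{n_k})_k$ with
$$\|v^{n_{k+1}} - v^{n_k}\|_{\bH^0(\cD)} \leq 2^{-k}, \qquad k\geq 1,$$
which is possible because a convergent sequence is Cauchy (choose indices $n_k$ so that $\|v^n - v\|_{\bH^0(\cD)}\leq 2^{-k-1}$ for all $n\geq n_k$ and apply the triangle inequality). Next I would introduce the candidate dominating function
$$g := |v^{n_1}| + \sum_{k=1}^{\infty} |v^{n_{k+1}} - v^{n_k}|,$$
understood as the pointwise increasing limit of its partial sums.

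Then I would bound $\|g\|_{\bH^0(\cD)}$ using the triangle inequality together with the monotone convergence theorem:
$$\|g\|_{\bH^0(\cD)} \leq \|v^{n_1}\|_{\bH^0(\cD)} + \sum_{k=1}^{\infty}\|v^{n_{k+1}} - v^{n_k}\|_{\bH^0(\cD)} \leq \|v^{n_1}\|_{\bH^0(\cD)} + 1 < \infty,$$
so $g\in\bH^0(\cD)$. In particular $g$ is finite $d\bP\times dt\times dx$-a.e., whence the telescoping series $v^{n_1} + \sum_k (v^{n_{k+1}} - v^{n_k})$ converges absolutely almost everywhere; its sum coincides with the a.e.\ limit of the partial sums $v^{n_k}$, and by uniqueness of limits (this subsequence still converges to $v$ in $\bH^0(\cD)$) the limit equals $v$ a.e. Finally, for each $k$ the partial-sum estimate
$$|v^{n_k}| \leq |v^{n_1}| + \sum_{j=1}^{k-1}|v^{n_{j+1}} - v^{n_j}| \leq g$$
yields $\wt{v} = \sup_k |v^{n_k}| \leq g$, so $\wt{v}\in\bH^0(\cD)$ as claimed.

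Since every step is a routine application of the triangle inequality and of monotone/dominated convergence, there is no substantial obstacle here. The only points requiring any care are to observe that $\bH^0(\cD)$ is an honest $L^2$ space over $\Omega\times\cT\times\cD$, so that the classical extraction is legitimate, and to note that taking a.e.\ limits and pointwise suprema of $\sP$-measurable processes again produces $\sP$-measurable objects, so both $v$ and $\wt{v}$ remain predictable.
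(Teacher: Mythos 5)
Your proof is correct and is exactly the standard Riesz--Fischer extraction (rapidly Cauchy subsequence plus telescoping dominating function) that the paper invokes by referring to Lemma 2.5 of Kobylanski's work, so it takes essentially the same route as the paper's own (cited) proof. The additional remarks on predictability being preserved under a.e.\ limits and countable suprema are a sensible touch but do not change the substance.
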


According to the above lemma, we can extract a subsequence $\{n_k\}$ such that

\begin{center}
$u^{n_k}_x\rightarrow u_x,~~ d\mathbb{P}\times dt\times dx\text{-a.e.}$ and
$\widetilde{v}:= \sup_{k}|u^{n_k}_x|\in \mathbb{H}^0(\mathcal{D})$,\\[0pt]
$q^{n_k}\rightarrow q,~~ d\mathbb{P}\times dt\times dx\text{-a.e.}$ and  $
\widetilde{q}:= \sup_{k}|q^{n_k}|\in \mathbb{H}^0(\mathcal{D})$.\\[0pt]
\end{center}

Then it follows from condition (a) that for a.e. $(\omega,t,x)\in\Omega\times
\mathcal{T}\times\mathcal{D}$,
\begin{equation*}
\lim_{k\rightarrow \infty}f^{n_k}(t,x,u^{n_k}(t,x),u^{n_k}_x(t,x),q^{n_k}(t,x))
= f(t,x,u(t,x),u_x(t,x),q(t,x)).
\end{equation*}
On the other hand, we have
\begin{equation*}
|f^{n_k}(t,x,u^{n_k},u^{n_k}_x,q^{n_k})| \leq \lambda_2(t,x) +
\lambda\mu_{0}\sup_k (|u^{n_k}_x|^2 + |q^{n_k}|^2) \leq \lambda_2(t,x) +
\lambda\mu_{0} (|\widetilde{v}|^2 + |\widetilde{q}|^2).
\end{equation*}
The dominated convergence theorem yields
\begin{eqnarray*}
\lim_{k\rightarrow\infty}\mathbb{E}\int_0^T\!\!\int_{\mathcal{D}} \Big|
f^{n_k}(t,x,u^{n_k}(t,x),u^{n_k}_x(t,x),q^{n_k}(t,x)) ~~~~~~ \\
- f(t,x,u(t,x),u_x(t,x),q(t,x))\Big|\,dxdt = 0.
\end{eqnarray*}
In view of the strong convergence of $(u^{n_k}_x)$~and~$(q^{n_k})$ in $
\mathbb{H}^0(\mathcal{D})$, we obtain that $(u,q)$ is a weak solution to BSPDE
\eqref{c5:main}-\eqref{c5:con}. \medskip

\emph{Step 3}. Finally we prove that $u\in L^{2}(\Omega ;C(\mathcal{T};L^{2}(
\mathcal{D})))$. Applying It\^{o}'s formula to $\Vert u^{n_{k}}(t,\cdot
)-u^{n_{l}}(t,\cdot )\Vert _{L^{2}(\mathcal{D})}^{2}$ and proceeding several
standard computation, we get that
\begin{align*}
& \mathbb{E}\sup_{t\in \mathcal{T}}\Vert u^{n_{k}}(t,\cdot
)-u^{n_{l}}(t,\cdot )\Vert _{L^{2}}^{2} \\
& \leq ~\mathbb{E}\int_{0}^{T}\!\!\int_{\mathcal{D}
}|u^{n_{k}}-u^{n_{l}}||f^{n_{k}}-f^{n_{l}}|\,dxds+\mathbb{E}\sup_{t\in
\mathcal{T}}\int_{0}^{T}\!\!\int_{\mathcal{D}
}(u^{n_{k}}-u^{n_{l}})(q^{n_{k}}-q^{n_{l}})\,dxdW_{s} \\
& \leq ~2M\mathbb{E}\int_{0}^{T}\!\!\int_{\mathcal{D}}|f^{n_{k}}-f^{n_{l}}|
\,dxds+\frac{1}{2}\mathbb{E}\sup_{t\in \mathcal{T}}\Vert u^{n_{k}}(t,\cdot
)-u^{n_{l}}(t,\cdot )\Vert _{L^{2}}^{2}+C\Vert q^{n_{k}}-q^{n_{l}}\Vert _{
\mathbb{H}^{0}}^{2}.
\end{align*}
Hence it is easy to see that
\begin{equation*}
\mathbb{E}\sup_{t\in \mathcal{T}}\Vert u^{n_{k}}(t,\cdot )-u^{n_{l}}(t,\cdot
)\Vert _{L^{2}}^{2}\rightarrow 0,~~~~\text{as}~~k,l\rightarrow \infty ,
\end{equation*}
which implies that $\{u^{n_{k}}\}$ is a Cauchy sequence in $L^{2}(\Omega ;C(
\mathcal{T};L^{2}(\mathcal{D})))$, and thus its limit $u\in L^{2}(\Omega ;C(
\mathcal{T};L^{2}(\mathcal{D})))$. The proof of Proposition \ref{c5:prop.01} is
complete.

\bibliographystyle{model1-num-names}

\end{document}